\documentclass[11pt,reqno,twoside]{amsart}
\usepackage{amssymb,amsmath,amsthm,soul,color,paralist}
\usepackage{t1enc}
\usepackage[cp1250]{inputenc}
\usepackage{a4,indentfirst,latexsym}
\usepackage{graphics}
\usepackage{mathrsfs}
\usepackage{cite,enumitem,graphicx}
\usepackage[colorlinks=true,urlcolor=blue,
citecolor=red,linkcolor=blue,linktocpage,pdfpagelabels,
bookmarksnumbered,bookmarksopen]{hyperref}
\usepackage[english]{babel}
\usepackage[left=2.50cm,right=2.50cm,top=2.72cm,bottom=2.72cm]{geometry}
\usepackage[metapost]{mfpic}
\usepackage[hyperpageref]{backref}
\usepackage[colorinlistoftodos]{todonotes}
\usepackage[normalem]{ulem}

\makeatletter
\providecommand\@dotsep{5}
\def\listtodoname{List of Todos}
\def\listoftodos{\@starttoc{tdo}\listtodoname}
\makeatother

\numberwithin{equation}{section}

\newcommand{\R}{\mathbb{R}}

\newcommand{\q}{q^{*}_{s}}
\newcommand{\2}{p^{*}_{s}}

\DeclareMathOperator{\dive}{div}
\DeclareMathOperator{\supp}{supp}
\DeclareMathOperator{\e}{\varepsilon}

\newtheorem{lem}{Lemma}[section]
\newtheorem{thm}{Theorem}[section]
\newtheorem{defn}{Definition}[section]

\title[Fractional $p\&q$ Laplacian problems in $\R^{N}$ with critical growth]{Fractional $p\&q$ Laplacian problems in $\mathbb{R}^{N}$ with critical growth}

\author[V. Ambrosio]{Vincenzo Ambrosio}
\address{Vincenzo Ambrosio\hfill\break\indent
Dipartimento di Ingegneria Industriale e Scienze Matematiche \hfill\break\indent
Universit\`a Politecnica delle Marche\hfill\break\indent
Via Brecce Bianche, 12\hfill\break\indent
60131 Ancona (Italy)}
\email{ambrosio@dipmat.univpm.it}


\keywords{fractional $p\&q$ Laplacians; variational methods; critical exponent}
\subjclass[2010]{47G20, 35R11, 35A15, 58E05}


\begin{document}

\maketitle

\begin{abstract}
We deal with the following nonlinear problem involving fractional $p\&q$ Laplacians:
\begin{equation*}
(-\Delta)^{s}_{p}u+(-\Delta)^{s}_{q}u+|u|^{p-2}u+|u|^{q-2}u=\lambda h(x) f(u)+|u|^{q^{*}_{s}-2}u \mbox{ in } \mathbb{R}^{N},
\end{equation*}
where $s\in (0,1)$, $1<p<q<\frac{N}{s}$, $\q=\frac{Nq}{N-sq}$, $\lambda>0$ is a parameter, $h$ is a nontrivial bounded perturbation and $f$ is a superlinear continuous function with subcritical growth. Using suitable variational arguments and concentration-compactness lemma, we prove the existence of a nontrivial non-negative solution for $\lambda$ sufficiently large.
\end{abstract}

\maketitle

\section{introduction}
In this paper we consider  the following fractional nonlinear problem
\begin{equation}\label{P}
(-\Delta)^{s}_{p}u+(-\Delta)^{s}_{q}u+|u|^{p-2}u+|u|^{q-2}u=\lambda h(x) f(u)+|u|^{q^{*}_{s}-2}u \mbox{ in } \R^{N},
\end{equation}
where $s\in (0,1)$, $1<p<q< \frac{N}{s}$, $\q=\frac{Nq}{N-sq}$, $\lambda>0$ is a parameter, $f: \R\rightarrow \R$ is a continuous function and $h:\R^{N}\rightarrow \R$ is a nontrivial bounded function. \\
Here the nonlocal operator $(-\Delta)^{s}_{t}$, with $t\in \{p, q\}$, is the fractional $t$-Laplacian operator which is defined, up to a normalizing constant, by 
$$
(-\Delta)^{s}_{t}u(x)= P.V. \int_{\R^{N}} \frac{|u(x)-u(y)|^{t-2}(u(x)-u(y))}{|x-y|^{N+st}} dy
$$ 
for any $u: \R^{N}\rightarrow \R$ sufficiently smooth; see \cite{Caff} for more motivations on this operator.\\
When $s=1$, equation \eqref{P} becomes a $p\&q$ elliptic problem of the form
\begin{equation}\label{pq}
-\Delta_{p}u-\Delta_{q} u+|u|^{p-2}u+|u|^{q-2}u= f(x,u) \mbox{ in } \R^{N}.
\end{equation} 
As explained in \cite{CIL}, the study of equation \eqref{pq} is motivated by the more general reaction-diffusion system:
$$
u_{t}=\dive(D(u)\nabla u)+c(x,u) \mbox{ and } D(u)=|\nabla u|^{p-2}+|\nabla u|^{q-2},
$$ 
which finds applications in biophysics, plasma physics and chemical reaction design. In these contexts, $u$  represents a concentration, $\dive(D(u) \nabla u)$ is the diffusion with diffusion coefficient $D(u)$, and the reaction term $c(x, u)$ relates to source and loss processes. We recall that classical $p\&q$ Laplacian problems in bounded domains and in the whole of $\R^{N}$ have been  widely investigated by many authors; see for instance \cite{AF, BF, CIL, CCF, Fig1, Fig2, LG, LL, MP} and the references therein. \\
On the other hand, in the last years a great attention has been devoted to the study of the fractional $p$-Laplacian operator.
For instance, fractional $p$-eigenvalue problems have been considered in \cite{FrP, LLq}. Some interesting regularity results for weak solutions can be found in \cite{DKP2, IMS}. Several existence and multiplicity results for problems set in bounded domains or in the whole of $\R^{N}$ have been established in \cite{A1, A2, A5, AI, BMB, DelpezzoQuaas, PXZ, Torres}.
For more details on fractional operators and the corresponding nonlocal problems, we refer the interested reader to \cite{DPV, MBRS}.\\
Motivated by the above papers, in this work we are interested in the existence of nontrivial solutions for a fractional $p\&q$ Laplacian problem involving the critical exponent. 
To our knowledge, only one result for fractional $p\&q$ problems is present in the literature \cite{CB}, in which the authors studied existence, nonexistence and multiplicity for a nonlocal subcritical problem. 
The aim of this paper is to give a further result for this interesting class of fractional problems in the case of critical growth. 

Before stating our main result, we introduce the assumptions on the nonlinearity $f$. We suppose that $f: \R\rightarrow \R$ is a continuous function such that $f(t)=0$ for $t\leq 0$ and 
\begin{compactenum}[$(f_1)$]
\item $\lim_{|t|\rightarrow 0} \frac{|f(t)|}{|t|^{p-1}}=0$,
\item there exists $r\in (q, q^{*}_{s})$ such that $\lim_{|t|\rightarrow \infty} \frac{|f(t)|}{|t|^{r-1}}=0$,
\item there exists $\theta\in (q, q^{*}_{s})$ such that $0<\theta F(t)\leq f(t)t$ for all $t>0$, where $F(t)=\int_{0}^{t} f(\tau)d\tau$,
\end{compactenum}
and we assume that  $h:\R^{N}\rightarrow \R$ fulfills the following condition:
\begin{compactenum}[$(h)$]
\item $h\geq 0$, $h\not\equiv 0$ in $\R^{N}$ and $h\in L^{\infty}(\R^{N})\cap L^{\frac{\q}{\q-r}}(\R^{N})$.
\end{compactenum}
In order to find weak solutions to \eqref{P}, we look for critical points of the Euler-Lagrange functional $J: X\rightarrow \R$ defined as
$$
J(u)=\frac{1}{p}\|u\|^{p}_{s,p}+\frac{1}{q}\|u\|^{q}_{s,q}-\lambda\int_{\R^{N}} h(x) F(u)dx-\frac{1}{q^{*}_{s}} |u|_{q^{*}_{s}}^{q^{*}_{s}},
$$
where 
$$
\|u\|_{s,r}=([u]^{r}_{s, r}+|u|_{r}^{r})^{\frac{1}{r}},
$$
$$
[u]^{r}_{s, r}=\iint_{\R^{2N}} \frac{|u(x)-u(y)|^{r}}{|x-y|^{N+sr}} dxdy \quad \mbox{ and } \quad |u|_{r}^{r}=\int_{\R^{N}} |u|^{r} dx.
$$
Here we denote by $X=W^{s, p}(\R^{N})\cap W^{s,q}(\R^{N})$ the space endowed with the norm
$$
\|u\|:=\|u\|_{s,p}+\|u\|_{s,q}.
$$
Then we give the following definition:
\begin{defn}
We say that $u\in X$ is a weak solution to \eqref{P} if for any $v\in X$ we have
\begin{align*}
&\iint_{\R^{2N}} \frac{|u(x)-u(y)|^{p-2}(u(x)-u(y))}{|x-y|^{N+sp}}(v(x)-v(y))dxdy+\int_{\R^{N}} |u|^{p-2}uvdx \\
&+\iint_{\R^{2N}} \frac{|u(x)-u(y)|^{q-2}(u(x)-u(y))}{|x-y|^{N+sq}}(v(x)-v(y))dxdy+\int_{\R^{N}} |u|^{q-2}uvdx \\
&=\lambda \int_{\R^{N}} h(x) f(u)vdx+\int_{\R^{N}} |u|^{\q-2}u v dx.
\end{align*}
\end{defn}

\noindent
The main result of this paper can be stated as follows:
\begin{thm}\label{thm1}
Assume that $(f_1)$-$(f_3)$ and $(h)$ hold. Then there exists $\lambda^{*}>0$ such that problem \eqref{P} admits a nontrivial non-negative solution for all $\lambda\geq \lambda^{*}$.
\end{thm}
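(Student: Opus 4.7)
My plan is to apply the Mountain Pass Theorem to $J$ and then recover compactness through a sharp upper bound on the mountain pass level; the critical embedding $X\hookrightarrow L^{\q}(\R^{N})$ is non-compact, and this is the main obstacle, to be circumvented by exploiting the parameter $\lambda$.

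\textbf{Mountain pass geometry and bounded PS sequence.}  From $(f_{1})$-$(f_{2})$ one gets $|F(t)|\leq \e|t|^{p}+C_{\e}|t|^{r}$ for every $\e>0$. Combined with the continuous embeddings of $X$ into $L^{p}$, $L^{r}$ and $L^{\q}$ (valid since $p<r<\q$), this yields $J(u)\geq \rho_{0}>0$ on a small sphere $\|u\|=\rho$. On the other hand, $(f_{3})$ gives $F(t)\geq C_{1}t^{\theta}-C_{2}$ for $t\geq 0$; picking $v\in X$, $v\geq 0$, with $\int_{\R^{N}}h(x)F(v)dx>0$ (which exists because $h\geq 0$, $h\not\equiv 0$), and using $\theta>q>p$, we obtain $J(tv)\to -\infty$ as $t\to +\infty$. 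The Mountain Pass Theorem then produces a Palais-Smale sequence $(u_{n})\subset X$ at the level $c_{\lambda}>0$, and the standard combination $J(u_{n})-\frac{1}{\theta}\langle J'(u_{n}),u_{n}\rangle$, together with $p<q<\theta<\q$, shows that $(u_{n})$ is bounded in $X$.

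\textbf{Energy estimate and compactness.}  The crucial step is to prove $c_{\lambda}<\kappa\,S_{*}^{N/(sq)}$ for $\lambda$ large, where $\kappa:=\frac{1}{\theta}-\frac{1}{\q}>0$ and $S_{*}$ denotes the best constant in the fractional Sobolev inequality $S_{*}|u|_{\q}^{q}\leq [u]_{s,q}^{q}$ (the threshold is dictated by the $q$-term since $q>p$). Fixing the above $v$, the function $t\mapsto J(tv)$ attains its maximum at some $t_{\lambda}>0$; comparing with the Ambrosetti-Rabinowitz lower bound for $F$ one gets $t_{\lambda}\leq C\lambda^{-1/(\theta-q)}$, whence
\[
c_{\lambda}\leq \max_{t\geq 0} J(tv)\leq C'\lambda^{-p/(\theta-q)}\longrightarrow 0^{+}\quad\text{as }\lambda\to \infty,
\]
so the requested $\lambda^{*}$ exists. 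Extracting $u_{n}\rightharpoonup u$ in $X$, with $u_{n}\to u$ a.e.\ and in $L^{\ell}_{loc}(\R^{N})$ for $\ell\in[1,\q)$, the condition $h\in L^{\q/(\q-r)}(\R^{N})$ combined with H\"older's inequality provides the tightness of the subcritical term, so $u$ is a weak solution of \eqref{P}. To rule out concentration of $|u_{n}|^{\q}$ I invoke a fractional Lions-type concentration-compactness lemma for the $q$-energy: there exist at most countably many points $x_{j}\in \R^{N}$ and atoms $\nu_{j},\mu_{j}$ with $\mu_{j}\geq S_{*}\nu_{j}^{q/\q}$ such that $|u_{n}|^{\q}\rightharpoonup |u|^{\q}+\sum_{j}\nu_{j}\delta_{x_{j}}$ and an analogous decomposition holds for the $q$-Gagliardo measure. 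Testing $\langle J'(u_{n}),\varphi_{j,\delta}u_{n}\rangle\to 0$ for a cut-off $\varphi_{j,\delta}$ around $x_{j}$ of radius $\delta\to 0$ yields $\mu_{j}=\nu_{j}$, forcing $\nu_{j}=0$ or $\nu_{j}\geq S_{*}^{N/(sq)}$; in the second case
\[
c_{\lambda}=\lim_{n}\bigl[J(u_{n})-\tfrac{1}{\theta}\langle J'(u_{n}),u_{n}\rangle\bigr]\geq \Bigl(\tfrac{1}{\theta}-\tfrac{1}{\q}\Bigr)\nu_{j}\geq \kappa\, S_{*}^{N/(sq)},
\]
contradicting the energy bound. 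A similar localisation at infinity (using the decay of $h$ in $L^{\q/(\q-r)}$) excludes loss of mass, so $u_{n}\to u$ in $L^{\q}(\R^{N})$. The $(S_{+})$-property of the fractional $p$- and $q$-Laplacians then upgrades this to $u_{n}\to u$ in $X$, giving $J(u)=c_{\lambda}>0$ and hence $u\not\equiv 0$.

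\textbf{Non-negativity and main difficulty.}  Testing the equation with $v=u^{-}\in X$, using $f(t)=0$ for $t\leq 0$ and the elementary inequality $|a-b|^{t-2}(a-b)(a^{-}-b^{-})\geq |a^{-}-b^{-}|^{t}$ for $t\in\{p,q\}$, one obtains $\|u^{-}\|_{s,p}^{p}+\|u^{-}\|_{s,q}^{q}\leq 0$, whence $u\geq 0$. The most delicate point is the concentration-compactness analysis: the two nonlocal operators require simultaneous control of the $p$- and $q$-Gagliardo energies, and the localisation for the fractional $q$-Laplacian produces commutator error terms (of Brezis-Lieb type) which have to be absorbed carefully using the summability of $h$ and the subcritical growth $(f_{2})$.
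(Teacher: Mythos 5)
Your proposal follows essentially the same route as the paper: mountain pass geometry, the key energy estimate $c_{\lambda}<\bigl(\tfrac{1}{\theta}-\tfrac{1}{\q}\bigr)S_{*}^{N/(sq)}$ for large $\lambda$, a fractional concentration--compactness lemma controlling both interior atoms and loss of mass at infinity, elimination of the atoms through the localized Nehari identity, and the sign argument via $u^{-}$. One small caution: the quantitative rate $t_{\lambda}\lesssim \lambda^{-1/(\theta-q)}$ that you extract from the Ambrosetti--Rabinowitz bound is not actually justified, because the inequality $F(t)\geq At^{\theta}-B$ is only effective for $t$ bounded away from $0$ whereas $t_{\lambda}v$ becomes uniformly small; the paper instead argues qualitatively from the Nehari identity that any accumulation point $t_{0}$ of $(t_{\lambda})$ must be $0$ (otherwise the term $\lambda\int h\,f(t_{\lambda}v)t_{\lambda}v\,dx$ would blow up), which already gives $c_{\lambda}\to 0$ and suffices. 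Finally, where you invoke an $(S_{+})$-type property to upgrade $u_{n}\to u$ in $L^{\q}$ to strong convergence in $X$, the paper instead combines $\langle J'(u_{n}),u_{n}\rangle=o_{n}(1)$ and $\langle J'(u_{n}),u\rangle=o_{n}(1)$ with the Brezis--Lieb lemma; both work, with the latter avoiding the need to verify $(S_{+})$ for the fractional operators.
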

The proof of Theorem \ref{thm1} is obtained via variational methods inspired by \cite{Fig1}.
Anyway, the presence of two fractional Laplacian operators, the perturbation $h$ and the lack of compactness due to the critical exponent make our analysis rather delicate and intriguing; see the proof of Lemma \ref{lem3}. 
More precisely, we prove a variant of the concentration-compactness lemma \cite{Lions}, which takes care of the possible loss of mass at infinity in the spirit of \cite{Ch} (see Lemma \ref{CCL}), and we show that weak limits of Palais-Smale sequences of $J$ are weak solutions to \eqref{P} in a different way with respect to the one given in \cite{Fig1} which is based on some "local" arguments developed in \cite{LG}. We are also able to cover the case $1<p<q$ which has not been considered in \cite{Fig1}.\\
The paper is organized as follows: in Section $2$ we recall some useful lemmas which will be used along the paper. In particular, we give a variant of the concentration-compactness lemma \cite{Lions} for the fractional $p$-Laplacian. In Section $3$ we show that \eqref{P} admits a nontrivial solution for $\lambda$ big enough by applying the mountain pass theorem \cite{AR} and a suitable version of the Lions' compactness result \cite{Lionsll}.

\section{preliminaries}
In this section, we collect some useful results about fractional Sobolev spaces. For more details we refer the interested reader to \cite{DPV, MBRS}.\\
Let us define $D^{s, p}(\R^{N})$ as the completion of $C^{\infty}_{c}(\R^{N})$ with respect to the norm
$$
[u]^{p}_{s, p}=\iint_{\R^{2N}} \frac{|u(x)-u(y)|^{p}}{|x-y|^{N+sp}} dxdy.
$$
We denote by $W^{s,p}(\R^{N})$ the set of functions $u:\R^{N}\rightarrow \R$ belonging to $L^{p}(\R^{N})$ such that $[u]_{s,p}<\infty$. 
Let us recall the following fundamental embeddings:
\begin{thm}\label{Sembedding}\cite{DPV}
Let $N>s p$. Then there exists a constant $S_{*}=S_{*}(N, s, p)>0$ such that 
$$
S_{*} |u|_{p^{*}_{s}}^{p}\leq [u]_{s,p}^{p} \quad \forall u\in D^{s, p}(\R^{N}).
$$
Moreover, the space $W^{s, p}(\R^{N})$ is continuously embedded in $L^{t}(\R^{N})$ for any $t\in [p, p^{*}_{s}]$, and compactly embedded in $L^{t}(\R^{N})$ for any $t\in [1, p^{*}_{s})$. 
\end{thm}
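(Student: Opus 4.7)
The statement has three parts and I would prove them sequentially: (i) the sharp Sobolev inequality on the homogeneous space $D^{s,p}(\R^N)$, (ii) the continuous embeddings $W^{s,p}(\R^N) \hookrightarrow L^t(\R^N)$ for $p \le t \le p^{*}_{s}$, and (iii) compactness of the embedding into $L^t$ for $t \in [1, p^{*}_{s})$ (understood on bounded subdomains, since translation invariance prevents truly global compactness on $\R^N$).

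For (i) I would work first on $u \in C^\infty_c(\R^N)$ and then extend to $D^{s,p}(\R^N)$ by density. The main tool is a dyadic level-set decomposition: writing $A_k = \{|u| > 2^k\}$ and introducing the truncations $u_k = \min\{(|u|-2^k)_+,\, 2^k\}$, the layer-cake identity yields $|u|^{p^{*}_{s}}_{p^{*}_{s}}$ as a sum over $k$ of a term proportional to $2^{k p^{*}_{s}} |A_k|$. Each $|A_k|$ is in turn estimated from below by a capacitary inequality involving $[u_{k-1}]_{s,p}$ on the transition annulus $A_{k-1}\setminus A_k$. Assembling the pieces with H\"older's inequality (exponents $p^{*}_{s}/p$ and its conjugate) and using the discrete bound $\sum_k [u_k]^p_{s,p} \le C\,[u]^p_{s,p}$ --- whose proof requires careful bookkeeping of pairs $(x,y)$ with $|u(x)|$ and $|u(y)|$ in distinct dyadic ranges --- produces the inequality with a constant $S_*$ depending only on $N$, $s$, and $p$.

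Step (ii) is short once (i) is in hand: the bound $|u|^p_{p^{*}_{s}} \le S_*^{-1}[u]^p_{s,p} \le S_*^{-1}\|u\|^p_{s,p}$ together with the interpolation $|u|_t \le |u|^{1-\theta}_p |u|^{\theta}_{p^{*}_{s}}$ where $\tfrac{1}{t} = \tfrac{1-\theta}{p} + \tfrac{\theta}{p^{*}_{s}}$ delivers the continuous embedding into every intermediate $L^t$-space. For (iii), on a compact $K \subset \R^N$ I would apply the Riesz--Fr\'echet--Kolmogorov criterion to a bounded family $\mathcal{F} \subset W^{s,p}(\R^N)$. Uniform $L^p$-boundedness follows from (ii); the delicate point is $L^p$-equicontinuity in mean, which I would obtain via mollification: given a standard mollifier $\rho_\varepsilon$, one first establishes the auxiliary estimate $\|u - u*\rho_\varepsilon\|_p \le C\varepsilon^s [u]_{s,p}$ by a direct Fubini computation from the Gagliardo definition, and then controls $\|(u*\rho_\varepsilon)(\cdot+h) - u*\rho_\varepsilon\|_p$ via classical translation continuity on convolved functions. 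Compactness in $L^p(K)$ follows; to reach $L^t(K)$ for $t\in(p,p^{*}_{s})$ one interpolates with the uniform $L^{p^{*}_{s}}$ bound, while the range $t\in[1,p]$ is handled by the trivial inclusion $L^p(K) \hookrightarrow L^t(K)$.

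The main obstacle is clearly (i). For $p=2$ the inequality follows from Hardy--Littlewood--Sobolev via the Riesz potential representation of $(-\Delta)^{-s/2}$, but the nonlinear range $p \neq 2$ admits no such shortcut; the dyadic argument is robust but technically delicate, the heart of the matter being the combinatorial estimate $\sum_k [u_k]^p_{s,p} \le C[u]^p_{s,p}$ that reassembles a sum over dyadic truncations back into a single Gagliardo seminorm.
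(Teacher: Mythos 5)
The paper does not prove this theorem --- it is quoted directly from \cite{DPV} --- and your outline correctly reproduces the standard argument of that reference: the dyadic level-set/truncation scheme for the critical Sobolev inequality (where the key summability $\sum_{k}[u_{k}]_{s,p}^{p}\leq [u]_{s,p}^{p}$ indeed holds, since the truncations satisfy $\sum_{k}u_{k}=|u|$ pointwise and hence $\sum_{k}|u_{k}(x)-u_{k}(y)|\leq |u(x)-u(y)|$), interpolation for the continuous embeddings, and Riesz--Fr\'echet--Kolmogorov combined with the mollification estimate $|u-u*\rho_{\varepsilon}|_{p}\leq C\varepsilon^{s}[u]_{s,p}$ for compactness. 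You are also right to read the compactness claim locally, i.e.\ as compactness of $W^{s,p}(\R^{N})\hookrightarrow L^{t}(K)$ for compact $K$ (equivalently into $L^{t}_{loc}(\R^{N})$): global compactness on $\R^{N}$ fails by translation invariance, and the paper itself only ever invokes convergence in $L^{t}_{loc}$.
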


\noindent
Now we prove the following technical lemmas (see also \cite{AI, ZZR} for related results).
\begin{lem}\label{etaR}
Let $(u_{n})\subset D^{s,p}(\R^{N})$ be a bounded sequence and $\eta\in C^{\infty}(\R^{N})$ be such that $\eta=0$ in $B_{1}$ and $\eta=1$ in $\R^{N}\setminus B_{2}$, and we set $\eta_{R}(x)=\eta(x/R)$. Then
$$
\lim_{R\rightarrow \infty} \limsup_{n\rightarrow \infty} \iint_{\R^{2N}} \frac{|\eta_{R}(x)-\eta_{R}(y)|^{p}}{|x-y|^{N+sp}}|u_{n}(x)|^{p} dx dy=0.
$$
\end{lem}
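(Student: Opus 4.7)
\noindent Set $A_R(x) := \int_{\R^N} \frac{|\eta_R(x)-\eta_R(y)|^p}{|x-y|^{N+sp}}\,dy$, so the quantity to estimate is $\int_{\R^N} A_R(x)|u_n(x)|^p\,dx$. The plan hinges on the scaling identity $A_R(x)=R^{-sp}A_1(x/R)$, obtained from the change of variables $x=Rz$, $y=Rw$, where $A_1$ denotes the same integral at $R=1$. A routine splitting of the defining integral into $|z-w|\leq 1$ (where $|\eta(z)-\eta(w)|\leq C|z-w|$ and the condition $p>sp$ tames the singularity) and $|z-w|>1$ (where $|\eta|\leq 1$), together with the observation that for $|z|\geq 3$ only $w\in B_2$ contributes and $A_1(z)\lesssim |z|^{-N-sp}$, will show $A_1\in L^\infty(\R^N)\cap L^{N/sp}(\R^N)$. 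Thus $\|A_R\|_{L^\infty}\leq CR^{-sp}$, while $\|A_R\|_{L^{N/sp}}=\|A_1\|_{L^{N/sp}}$ is independent of $R$; H\"older with conjugate exponents $N/(sp)$ and $p^*_s/p$ then gives only the a priori bound $\int A_R|u_n|^p\leq \|A_1\|_{L^{N/sp}}|u_n|_{p^*_s}^p\leq C$, which is uniform but does not decay in $R$, so a finer argument is needed.

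Passing to a subsequence (not relabeled), I will assume $u_n\rightharpoonup u$ weakly in $D^{s,p}(\R^N)$, $u_n\to u$ a.e., and $u_n\to u$ strongly in $L^p(B_M)$ for every $M>0$; the last property uses that $(u_n)$ is bounded in $W^{s,p}(B_M)$ (thanks to the Sobolev embedding $D^{s,p}\hookrightarrow L^{p^*_s}$), combined with the compact embedding $W^{s,p}(B_M)\hookrightarrow L^p(B_M)$ valid on the bounded domain $B_M$. I will then show that for each fixed $R$,
\[
\int A_R(x)|u_n(x)|^p\,dx \longrightarrow \int A_R(x)|u(x)|^p\,dx \quad \text{as } n\to\infty.
\]
Given $\varepsilon>0$, I will choose $M$ so large that $\|A_R\|_{L^{N/sp}(B_M^c)}<\varepsilon$ (possible since $A_R\in L^{N/sp}$). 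H\"older will control the tail by $\int_{B_M^c}A_R(|u_n|^p+|u|^p)\,dx\leq C\varepsilon$ uniformly in $n$; on the core $B_M$, the elementary inequality $\bigl||a|^p-|b|^p\bigr|\leq C(|a|^{p-1}+|b|^{p-1})|a-b|$ combined with H\"older and strong $L^p(B_M)$-convergence of $u_n$ will give $\int_{B_M}A_R\bigl||u_n|^p-|u|^p\bigr|\,dx\to 0$. Hence $\limsup_n\int A_R|u_n|^p\,dx=\int A_R|u|^p\,dx$.

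It remains to prove $\int A_R|u|^p\,dx\to 0$ as $R\to\infty$ for the fixed $u\in L^{p^*_s}(\R^N)$. For $u\in C_c^\infty(\R^N)$ with $\supp u\subset B_M$, the bound $\int A_R|u|^p\leq \|A_R\|_\infty|u|_p^p\leq C_uR^{-sp}\to 0$ is immediate from the $L^\infty$-estimate for $A_R$. For general $u\in L^{p^*_s}$, I will approximate by $u_\varepsilon\in C_c^\infty$ with $|u-u_\varepsilon|_{p^*_s}<\varepsilon$; the same pointwise inequality together with H\"older (using the scale-invariant $L^{N/sp}$-bound on $A_R$) yields an error of order $O(\varepsilon)$ uniform in $R$, so letting first $R\to\infty$ and then $\varepsilon\to 0$ concludes.

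The main obstacle is precisely the scale-invariance of $\|A_R\|_{L^{N/sp}}$: no direct H\"older estimate can produce decay in $R$. The decay must be extracted in two stages, first by reducing the sequential problem to the decay statement for the single weak limit $u$ via the local $L^p$-compactness of $(u_n)$, and then by exploiting the $R^{-sp}$-scaling of $\|A_R\|_{L^\infty}$ on compactly supported approximants combined with a density argument.
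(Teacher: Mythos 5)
Your proposal is correct but follows a genuinely different route from the paper. The paper decomposes $\R^{2N}$ into three regions $X^{1}_{R}, X^{2}_{R}, X^{3}_{R}$ (with an auxiliary parameter $k>4$ and a further splitting of $X^2_R$ and $X^3_R$), observes that the $X^1_R$ piece vanishes identically, bounds the far-field of $X^2_R$ by $C/k^N$, and controls the remaining pieces by $L^p$ masses on annuli, which are then handled via H\"older, the limit $n\to\infty$, then $R\to\infty$, and finally $k\to\infty$. You instead isolate the nonlocal kernel $A_R(x)=\int_{\R^N}\frac{|\eta_R(x)-\eta_R(y)|^p}{|x-y|^{N+sp}}\,dy$, exploit the exact scaling identity $A_R=R^{-sp}A_1(\cdot/R)$, and verify $A_1\in L^\infty\cap L^{N/(sp)}(\R^N)$ (boundedness via the Lipschitz bound near the diagonal with $p-sp>0$; decay $A_1(z)\lesssim|z|^{-N-sp}$ for $|z|$ large), so that $\|A_R\|_\infty\lesssim R^{-sp}$ decays while $\|A_R\|_{L^{N/sp}}$ is exactly scale-invariant (and $N/(sp)$ is the correct H\"older dual to $p^*_s/p$). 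You then take the $n\to\infty$ limit by local $L^p$ compactness, reducing to the weak limit $u$, and take the $R\to\infty$ limit by approximating $u$ in $L^{p^*_s}$ by $C^\infty_c$ functions, on which the $L^\infty$ bound $R^{-sp}$ bites. Both arguments implicitly pass to a weakly convergent subsequence of $(u_n)$; this is common to the paper's proof as well and is harmless in applications. Your version is more conceptual — it cleanly separates the kernel analysis from the sequence analysis and makes the scaling structure (and the genuine obstruction, namely the scale invariance of the critical $L^{N/sp}$ norm of $A_R$) explicit — while the paper's is more elementary and self-contained, proceeding by direct region-by-region estimation without introducing the auxiliary function $A_R$ or invoking density.
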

\begin{proof}
Firstly, we note that  $\R^{2N}$ can be written as follows:
$$
\R^{2N}=((\R^{N}\setminus B_{2R})\times (\R^{N}\setminus B_{2R})) \cup ((\R^{N}\setminus B_{2R})\times B_{2R})\cup (B_{2R}\times \R^{N})=: X^{1}_{R}\cup X^{2}_{R} \cup X^{3}_{R}.
$$
Therefore,
\begin{align}\label{Pa1}
&\iint_{\R^{2N}}\frac{|\eta_{R}(x)-\eta_{R}(y)|^{p}}{|x-y|^{N+sp}} |u_{n}(x)|^{p} dx dy =\iint_{X^{1}_{R}}\frac{|\eta_{R}(x)-\eta_{R}(y)|^{p}}{|x-y|^{N+sp}} |u_{n}(x)|^{p} dx dy \nonumber \\
&+\iint_{X^{2}_{R}}\frac{|\eta_{R}(x)-\eta_{R}(y)|^{p}}{|x-y|^{N+sp}} |u_{n}(x)|^{p} dx dy+
\iint_{X^{3}_{R}}\frac{|\eta_{R}(x)-\eta_{R}(y)|^{p}}{|x-y|^{N+sp}} |u_{n}(x)|^{p} dx dy.
\end{align}
Now, we estimate each integral in \eqref{Pa1}.
Since $\eta_{R}=1$ in $\R^{N}\setminus B_{2R}$, we have
\begin{align}\label{Pa2}
\iint_{X^{1}_{R}}\frac{|u_{n}(x)|^{p}|\eta_{R}(x)-\eta_{R}(y)|^{p}}{|x-y|^{N+sp}} dx dy=0.
\end{align}
Take $k>4$. Clearly,
\begin{equation*}
X^{2}_{R}\subset (\R^{N} \setminus B_{2R})\times B_{2R} \subset ((\R^{N}\setminus B_{kR})\times B_{2R})\cup ((B_{kR}\setminus B_{2R})\times B_{2R}) 
\end{equation*}
Let us note that if $(x, y) \in (\R^{N}\setminus B_{kR})\times B_{2R}$, then
\begin{equation*}
|x-y|\geq |x|-|y|\geq |x|-2R>\frac{|x|}{2}. 
\end{equation*}
Therefore, using the above fact, $0\leq \eta_{R}\leq 1$ and applying the H\"older inequality we obtain
\begin{align*}
\int_{\R^{N}\setminus B_{kR}} \int_{B_{2R}} \frac{|u_{n}(x)|^{p}|\eta_{R}(x)-\eta_{R}(y)|^{p}}{|x-y|^{N+sp}} dx dy&\leq C \int_{\R^{N}\setminus B_{kR}} \int_{B_{2R}} \frac{|u_{n}(x)|^{p}}{|x|^{N+sp}}\, dxdy \\
&\leq CR^{N} \int_{\R^{N}\setminus B_{kR}} \frac{|u_{n}(x)|^{p}}{|x|^{N+sp}}\, dx \\
&\leq CR^{N} \left( \int_{\R^{N}\setminus B_{kR}} |u_{n}(x)|^{p^{*}_{s}} dx \right)^{\frac{p}{p^{*}_{s}}} \left(\int_{\R^{N}\setminus B_{kR}}\frac{1}{|x|^{\frac{N^{2}}{sp} +N}}\, dx \right)^{\frac{sp}{N}}\\
&\leq \frac{C}{k^{N}} \left( \int_{\R^{N}\setminus B_{kR}} |u_{n}(x)|^{p^{*}_{s}} dx \right)^{\frac{p}{p^{*}_{s}}}\leq \frac{C}{k^{N}}.
\end{align*}
Note that by $|\nabla \eta_{R}|\leq \frac{C}{R}$ and using the H\"older inequality we also have
\begin{align*}
\int_{B_{kR}\setminus B_{2R}} \int_{B_{2R}} \frac{|u_{n}(x)|^{p}|\eta_{R}(x)-\eta_{R}(y)|^{p}}{|x-y|^{N+sp}} dx dy&\leq \frac{C}{R^{p}} \int_{B_{kR}\setminus B_{2R}} \int_{B_{2R}} \frac{|u_{n}(x)|^{p}}{|x-y|^{N+p(s-1)}}\, dxdy\\
&\leq \frac{C}{R^{p}} (kR)^{p(1-s)} \int_{B_{kR}\setminus B_{2R}} |u_{n}(x)|^{p} dx\\
&\leq \frac{C k^{p(1-s)}}{R^{sp}} \int_{B_{kR}\setminus B_{2R}} |u_{n}(x)|^{p} dx.
\end{align*}
Gathering the above estimates we get 
\begin{align}\label{Pa3}
&\iint_{X^{2}_{R}}\frac{|u_{n}(x)|^{p}|\eta_{R}(x)-\eta_{R}(y)|^{p}}{|x-y|^{N+sp}} dx dy \nonumber \\
&=\int_{\R^{N}\setminus B_{kR}} \int_{B_{2R}} \frac{|u_{n}(x)|^{p}|\eta_{R}(x)-\eta_{R}(y)|^{p}}{|x-y|^{N+sp}} dx dy + \int_{B_{kR}\setminus B_{2R}} \int_{B_{2R}} \frac{|u_{n}(x)|^{p}|\eta_{R}(x)-\eta_{R}(y)|^{p}}{|x-y|^{N+sp}} dx dy \nonumber \\
&\leq \frac{C}{k^{N}}+ \frac{C k^{p(1-s)}}{R^{sp}} \int_{B_{kR}\setminus B_{2R}} |u_{n}(x)|^{p} dx.
\end{align}
On the other hand, 
\begin{align}\label{Ter1}
&\iint_{X^{3}_{R}} \frac{|u_{n}(x)|^{p} |\eta_{R}(x)- \eta_{R}(y)|^{p}}{|x-y|^{N+sp}}\, dxdy \nonumber\\
&\leq \int_{B_{2R}\setminus B_{R/k}} \int_{\R^{N}} \frac{|u_{n}(x)|^{p} |\eta_{R}(x)- \eta_{R}(y)|^{p}}{|x-y|^{N+sp}}\, dxdy + \int_{B_{R/k}} \int_{\R^{N}} \frac{|u_{n}(x)|^{p} |\eta_{R}(x)- \eta_{R}(y)|^{p}}{|x-y|^{N+sp}}\, dxdy. 
\end{align} 
Next, we estimate the integrals on the right hand side in \eqref{Ter1}. In view of
\begin{align*}
\int_{B_{2R}\setminus B_{R/k}} \int_{\R^{N} \cap \{y: |x-y|<R\}} \frac{|u_{n}(x)|^{p} |\eta_{R}(x)- \eta_{R}(y)|^{p}}{|x-y|^{N+sp}}\, dxdy \leq \frac{C}{R^{sp}} \int_{B_{2R}\setminus B_{R/k}} |u_{n}(x)|^{p} dx,
\end{align*}
and 
\begin{align*}
\int_{B_{2R}\setminus B_{R/k}} \int_{\R^{N} \cap \{y: |x-y|\geq R\}} \frac{|u_{n}(x)|^{p} |\eta_{R}(x)- \eta_{R}(y)|^{p}}{|x-y|^{N+sp}}\, dxdy \leq \frac{C}{R^{sp}} \int_{B_{2R}\setminus B_{R/k}} |u_{n}(x)|^{p} dx
\end{align*}
we can see that
\begin{align}\label{Ter2} 
\int_{B_{2R}\setminus B_{R/k}} \int_{\R^{N}} \frac{|u_{n}(x)|^{p} |\eta_{R}(x)- \eta_{R}(y)|^{p}}{|x-y|^{N+sp}}\, dxdy \leq \frac{C}{R^{sp}} \int_{B_{2R}\setminus B_{R/k}} |u_{n}(x)|^{p} dx. 
\end{align}
By the definition of $\eta_{R}$ and $0\leq \eta_{R}\leq 1$ we get 
\begin{align}\label{Ter3}
\int_{B_{R/k}} \int_{\R^{N}} \frac{|u_{n}(x)|^{p} |\eta_{R}(x)- \eta_{R}(y)|^{p}}{|x-y|^{N+sp}}\, dxdy &= \int_{B_{R/k}} \int_{\R^{N}\setminus B_{R}} \frac{|u_{n}(x)|^{p} |\eta_{R}(x)- \eta_{R}(y)|^{p}}{|x-y|^{N+sp}}\, dxdy\nonumber \\
&\leq C \int_{B_{R/k}} \int_{\R^{N}\setminus B_{R}} \frac{|u_{n}(x)|^{p}}{|x-y|^{N+sp}}\, dxdy\nonumber \\
&\leq C \int_{B_{R/k}} |u_{n}|^{p} dx \int_{(1-\frac{1}{k})R}^{\infty} \frac{1}{r^{1+sp}} dr\nonumber \\
&=\frac{C}{[(1-\frac{1}{k})R]^{sp}} \int_{B_{R/k}} |u_{n}|^{p} dx
\end{align}
where we used the fact that if $(x, y) \in B_{R/k}\times (\R^{N} \setminus B_{R})$ then $|x-y|>(1-\frac{1}{k})R$. \\
Thus, \eqref{Ter1}, \eqref{Ter2} and \eqref{Ter3} yield 
\begin{align}\label{Pa4}
\iint_{X^{3}_{R}} &\frac{|u_{n}(x)|^{p} |\eta_{R}(x)- \eta_{R}(y)|^{p}}{|x-y|^{N+sp}}\, dxdy \nonumber \\
&\leq \frac{C}{R^{sp}} \int_{B_{2R}\setminus B_{R/k}} |u_{n}(x)|^{p} dx + \frac{C}{[(1-\frac{1}{k})R]^{sp}} \int_{B_{R/k}} |u_{n}(x)|^{p} dx. 
\end{align}
In the light of \eqref{Pa1}, \eqref{Pa2}, \eqref{Pa3} and \eqref{Pa4} we can infer  
\begin{align}\label{Pa5}
\iint_{\R^{2N}} &\frac{|u_{n}(x)|^{p} |\eta_{R}(x)- \eta_{R}(y)|^{p}}{|x-y|^{N+sp}}\, dxdy \nonumber \\
&\leq \frac{C}{k^{N}} + \frac{Ck^{p(1-s)}}{R^{sp}} \int_{B_{kR}\setminus B_{2R}} |u_{n}(x)|^{p} dx + \frac{C}{R^{sp}} \int_{B_{2R}\setminus B_{R/k}} |u_{n}(x)|^{p} dx \nonumber \\
&+ \frac{C}{[(1-\frac{1}{k})R]^{sp}}\int_{B_{R/k}} |u_{n}(x)|^{p} dx. 
\end{align}
Since $(u_{n})$ is bounded in $D^{s,p}(\R^{N})$, we may suppose that $u_{n}\rightarrow u$ in $L^{p}_{loc}(\R^{N})$ for some $u\in D^{s,p}(\R^{N})$, thanks to Theorem \ref{Sembedding}. Then, taking the limit as $n\rightarrow \infty$ in \eqref{Pa5}, we obtain
\begin{align*}
&\limsup_{n\rightarrow \infty} \iint_{\R^{2N}} \frac{|u_{n}(x)|^{p} |\eta_{R}(x)- \eta_{R}(y)|^{p}}{|x-y|^{N+sp}}\, dxdy\\
&\leq \frac{C}{k^{N}} + \frac{Ck^{p(1-s)}}{R^{sp}} \int_{B_{kR}\setminus B_{2R}} |u(x)|^{p} dx + \frac{C}{R^{sp}} \int_{B_{2R}\setminus B_{R/k}} |u(x)|^{p} dx + \frac{C}{[(1-\frac{1}{k})R]^{sp}}\int_{B_{R/k}} |u(x)|^{p} dx \\
&\leq \frac{C}{k^{N}} + Ck^{p} \left( \int_{B_{kR}\setminus B_{2R}} |u(x)|^{p^{*}_{s}} dx\right)^{\frac{p}{p^{*}_{s}}} + C\left(\int_{B_{2R}\setminus B_{R/k}} |u(x)|^{p^{*}_{s}} dx\right)^{\frac{p}{p^{*}_{s}}} + C\left( \frac{1}{k-1}\right)^{sp} \left(\int_{B_{R/k}} |u(x)|^{p^{*}_{s}} dx\right)^{\frac{p}{p^{*}_{s}}}, 
\end{align*}
where in the last passage we used the H\"older inequality. \\
Since $u\in L^{p^{*}_{s}}(\R^{N})$ and $k>4$, it holds
\begin{align*}
\lim_{R\rightarrow \infty} \int_{B_{kR}\setminus B_{2R}} |u(x)|^{p^{*}_{s}} dx = \lim_{R\rightarrow \infty}  \int_{B_{2R}\setminus B_{R/k}} |u(x)|^{p^{*}_{s}} dx = 0, 
\end{align*}
from which we deduce that
\begin{align*}
&\lim_{R\rightarrow \infty} \limsup_{n\rightarrow \infty} \iint_{\R^{2N}} \frac{|u_{n}(x)|^{p} |\eta_{R}(x)- \eta_{R}(y)|^{p}}{|x-y|^{N+sp}}\, dxdy\\
&\leq \lim_{k\rightarrow \infty} \lim_{R\rightarrow \infty} \Bigl[\, \frac{C}{k^{N}} + Ck^{p} \left( \int_{B_{kR}\setminus B_{2R}} |u(x)|^{p^{*}_{s}} dx\right)^{\frac{p}{p^{*}_{s}}} + C\left(\int_{B_{2R}\setminus B_{R/k}} |u(x)|^{p^{*}_{s}} dx\right)^{\frac{p}{p^{*}_{s}}} \\
&\qquad+ C\left(\frac{1}{k-1}\right)^{sp} \left(\int_{B_{R/k}} |u(x)|^{p^{*}_{s}} dx\right)^{\frac{p}{p^{*}_{s}}}\, \Bigr]\\
&\leq \lim_{k\rightarrow \infty} \frac{C}{k^{N}} + C\left(\frac{1}{k-1}\right)^{sp} \left(\int_{\R^{N}} |u(x)|^{p^{*}_{s}} dx \right)^{\frac{p}{p^{*}_{s}}}=0.
\end{align*}
\end{proof}
Arguing as in the previous lemma we can prove the next result.
\begin{lem}\label{psilem}
Let $(u_{n})\subset D^{s,p}(\R^{N})$ be a bounded sequence and $\psi\in C^{\infty}_{c}(\R^{N})$ be such that $0\leq \psi\leq 1$, $\psi=1$ in $B_{1}$, $\psi=0$ in $\R^{N}\setminus B_{2}$ and $|\nabla \psi|_{\infty}\leq 2$. Set $\psi_{\rho}(x)=\psi(\frac{x-x_{i}}{\rho})$ where $x_{i}\in \R^{N}$ is a fixed point. Then we have
\begin{equation}\label{NIOOcritico}
\lim_{\rho\rightarrow 0}\limsup_{n\rightarrow \infty}  \iint_{\R^{2N}} |u_{n}(x)|^{p} \frac{|\psi_{\rho}(x)-\psi_{\rho}(y)|^{p}}{|x-y|^{N+sp}} \, dx dy =0.
\end{equation}
\end{lem}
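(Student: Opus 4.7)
The plan is to mimic the proof of Lemma \ref{etaR}, swapping the roles of ``large'' and ``small'' scales. Since $\psi_\rho$ is supported in $B_{2\rho}(x_i)$, I may assume after translation that $x_i=0$ and decompose
\begin{align*}
\R^{2N}=[(\R^N\setminus B_{2\rho})\times(\R^N\setminus B_{2\rho})]\cup[(\R^N\setminus B_{2\rho})\times B_{2\rho}]\cup[B_{2\rho}\times\R^N]=:Y^1_\rho\cup Y^2_\rho\cup Y^3_\rho.
\end{align*}
The contribution from $Y^1_\rho$ vanishes since $\psi_\rho\equiv 0$ on both arguments. Following Lemma \ref{etaR}, I introduce an auxiliary parameter $k>4$ and further split $Y^2_\rho$ into a far part $(\R^N\setminus B_{k\rho})\times B_{2\rho}$ and a near part $(B_{k\rho}\setminus B_{2\rho})\times B_{2\rho}$, and $Y^3_\rho$ into $B_{2\rho}\times(\R^N\setminus B_{k\rho})$ and $B_{2\rho}\times B_{k\rho}$.

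On the two far pieces I will exploit $|x-y|\geq|x|/2$ (respectively $|y|/2$), valid once $k>4$, together with $0\leq\psi_\rho\leq 1$, H\"older's inequality, and the Sobolev embedding $D^{s,p}(\R^N)\hookrightarrow L^{p^{*}_{s}}(\R^N)$ of Theorem \ref{Sembedding}. This yields bounds of order $C/k^N$ and $C/k^{sp}$, uniform in $n$ and $\rho$. On the two near pieces I will use the Lipschitz estimate $|\psi_\rho(x)-\psi_\rho(y)|\leq(2/\rho)|x-y|$ and integrate out the second variable by splitting it into $\{|x-y|<\rho\}$ (where the Lipschitz bound is advantageous) and $\{|x-y|\geq\rho\}$ (where $0\leq\psi_\rho\leq 1$ is used directly), in the same spirit as \eqref{Ter2}--\eqref{Ter3}. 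The resulting estimates are
\begin{align*}
Ck^{p(1-s)}\rho^{-sp}\int_{B_{k\rho}}|u_n(x)|^p\,dx\qquad\text{and}\qquad C\rho^{-sp}\int_{B_{k\rho}}|u_n(x)|^p\,dx
\end{align*}
for the near parts of $Y^2_\rho$ and $Y^3_\rho$, respectively.

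The decisive algebraic cancellation lies in taming the apparently singular factor $\rho^{-sp}$. Since $(u_n)$ is bounded in $D^{s,p}(\R^N)$, Theorem \ref{Sembedding} gives $u_n\to u$ in $L^p_{\mathrm{loc}}(\R^N)$ for some $u\in D^{s,p}(\R^N)$. After sending $n\to\infty$, H\"older's inequality with exponents $(p^{*}_{s}/p,\,N/(sp))$ delivers
\begin{align*}
\int_{B_{k\rho}}|u(x)|^p\,dx\leq C(k\rho)^{sp}\left(\int_{B_{k\rho}}|u(x)|^{p^{*}_{s}}\,dx\right)^{p/p^{*}_{s}},
\end{align*}
so that $\rho^{-sp}\cdot(k\rho)^{sp}=k^{sp}$ absorbs the singularity, leaving contributions of the form $Ck^p\bigl(\int_{B_{k\rho}}|u|^{p^{*}_{s}}\,dx\bigr)^{p/p^{*}_{s}}$ and $Ck^{sp}\bigl(\int_{B_{k\rho}}|u|^{p^{*}_{s}}\,dx\bigr)^{p/p^{*}_{s}}$. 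Taking $\rho\to 0$ with $k$ fixed kills these pieces by absolute continuity of the integral of $|u|^{p^{*}_{s}}\in L^1(\R^N)$, and the residual far-piece terms $C(k^{-N}+k^{-sp})$ vanish as $k\to\infty$.

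The main obstacle I anticipate is the bookkeeping in the near pieces, where each factor $\rho^{-sp}$ coming from $\nabla\psi_\rho$ must be balanced exactly against the $L^p$-mass of $u$ on the shrinking ball $B_{k\rho}$; once this balance is verified, the proof is a routine adaptation of Lemma \ref{etaR} with ``$R$ large'' replaced by ``$\rho$ small'', the ordering of limits becoming $n\to\infty$, then $\rho\to 0$, then $k\to\infty$.
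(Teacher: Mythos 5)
Your proposal is correct and follows essentially the same route as the paper: the same three-piece decomposition of $\R^{2N}$ (centered at $x_i$ after translation), the same auxiliary dyadic parameter, the same Lipschitz bound on the near pieces and the same far-field estimate via $|x-y|\gtrsim|x|$ with H\"older and the Sobolev embedding, followed by the same $\rho^{-sp}\cdot(k\rho)^{sp}=k^{sp}$ cancellation and the same order of limits $n\to\infty$, then $\rho\to0$, then $k\to\infty$. The minor bookkeeping differences (your extra factor $k^{p(1-s)}$ on one near piece and a $k^{-sp}$ instead of a $k^{-N}$ on one far piece) are harmless since they are still absorbed when $\rho\to0$ and $k\to\infty$, respectively.
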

\begin{proof}
For the reader convenience, we give the details of the proof. It is clear that
\begin{align*}
\R^{2N}&=((\R^{N}\setminus B_{2\rho}(x_{i}))\times (\R^{N}\setminus B_{2\rho}(x_{i})))\cup (B_{2\rho}(x_{i})\times \R^{N}) \cup ((\R^{N}\setminus B_{2\rho}(x_{i}))\times B_{2\rho}(x_{i}))\\
&=: X^{1}_{\rho}\cup X^{2}_{\rho} \cup X^{3}_{\rho}.
\end{align*}
Hence,
\begin{align}\label{Pa1critico}
&\iint_{\R^{2N}}  |u_{n}(x)|^{p} \frac{|\psi_{\rho}(x)-\psi_{\rho}(y)|^{p}}{|x-y|^{N+sp}} \, dx dy \nonumber\\
&=\iint_{X^{1}_{\rho}}  |u_{n}(x)|^{p}\frac{|\psi_{\rho}(x)-\psi_{\rho}(y)|^{p}}{|x-y|^{N+sp}} \, dx dy +\iint_{X^{2}_{\rho}}  |u_{n}(x)|^{2}\frac{|\psi_{\rho}(x)-\psi_{\rho}(y)|^{p}}{|x-y|^{N+sp}} \, dx dy \nonumber\\
&+ \iint_{X^{3}_{\rho}}  |u_{n}(x)|^{p}\frac{|\psi_{\rho}(x)-\psi_{\rho}(y)|^{p}}{|x-y|^{N+sp}} \, dx dy.
\end{align}
In what follows, we estimate each integral in (\ref{Pa1critico}).
Since $\psi=0$ in $\R^{N}\setminus B_{2}$, we have
\begin{align}\label{Pa2critico}
\iint_{X^{1}_{\rho}}  |u_{n}(x)|^{p}\frac{|\psi_{\rho}(x)-\psi_{\rho}(y)|^{p}}{|x-y|^{N+sp}} \, dx dy=0.
\end{align}
Recalling that $0\leq \psi\leq 1$ and $|\nabla \psi|_{\infty}\leq C$, we obtain
\begin{align}\label{Pa3critico}
&\iint_{X^{2}_{\rho}}  |u_{n}(x)|^{p}\frac{|\psi_{\rho}(x)-\psi_{\rho}(y)|^{p}}{|x-y|^{N+sp}} \, dx dy \nonumber\\
&=\int_{B_{2\rho}(x_{i})} \,dx \int_{\{y\in \R^{N}: |x-y|\leq \rho\}}  |u_{n}(x)|^{p}\frac{|\psi_{\rho}(x)-\psi_{\rho}(y)|^{p}}{|x-y|^{N+2s}} \, dy \nonumber \\
&\quad+\int_{B_{2\rho}(x_{i})} \, dx \int_{\{y\in \R^{N}: |x-y|> \rho\}} |u_{n}(x)|^{p}\frac{|\psi_{\rho}(x)-\psi_{\rho}(y)|^{p}}{|x-y|^{N+sp}} \, dy  \nonumber\\
&\leq C\rho^{-p} \int_{B_{2\rho}(x_{i})} \, dx \int_{\{y\in \R^{N}: |x-y|\leq \rho\}} \frac{ |u_{n}(x)|^{2}}{|x-y|^{N+ps-p}} \, dy\nonumber \\
&\quad+ C \int_{B_{2\rho}(x_{i})} \, dx \int_{\{y\in \R^{N}: |x-y|> \rho\}} \frac{ |u_{n}(x)|^{p}}{|x-y|^{N+sp}} \, dy \nonumber\\
&\leq C \rho^{-sp} \int_{B_{2\rho}(x_{i})}  |u_{n}(x)|^{p} \, dx+C \rho^{-sp} \int_{B_{2\rho}(x_{i})}  |u_{n}(x)|^{p} \, dx \nonumber \\
&\leq C \rho^{-sp} \int_{B_{2\rho}(x_{i})}  |u_{n}(x)|^{p} \, dx.
\end{align}
On the other hand,
\begin{align}\label{Pa4critico}
&\iint_{X^{3}_{\rho}}  |u_{n}(x)|^{p}\frac{|\psi_{\rho}(x)-\psi_{\rho}(y)|^{2}}{|x-y|^{N+sp}} \, dx dy \nonumber\\
&=\int_{\R^{N}\setminus B_{2\rho}(x_{i})} \, dx \int_{\{y\in B_{2\rho}(x_{i}): |x-y|\leq \rho\}}  |u_{n}(x)|^{p}\frac{|\psi_{\rho}(x)-\psi_{\rho}(y)|^{p}}{|x-y|^{N+sp}} \, dy \nonumber\\
&\quad+\int_{\R^{N}\setminus B_{2\rho}(x_{i})} \,dx \int_{\{y\in B_{2\rho}(x_{i}): |x-y|> \rho\}}  |u_{n}(x)|^{p}\frac{|\psi_{\rho}(x)-\psi_{\rho}(y)|^{p}}{|x-y|^{N+sp}} \, dy\nonumber \\
&=: A_{\rho, n}+ B_{\rho, n}. 
\end{align}
Let us note that, if $|x-y|<\rho$ and $|y-x_{i}|<2\rho$, then $|x-x_{i}|<3\rho$. Accordingly,
\begin{align}\label{Pa5critico}
A_{\rho, n}&\leq \rho^{-p} |\nabla \psi|_{\infty}^{p} \int_{B_{3\rho}(x_{i})} \, dx \int_{\{y\in B_{2\rho}(x_{i}): |x-y|\leq \rho\}} \frac{ |u_{n}(x)|^{p}}{|x-y|^{N+ps-p}} \, dy \nonumber\\
&\leq C\rho^{-p}   \int_{B_{3\rho}(x_{i})}  |u_{n}(x)|^{p} \, dx \int_{\{z\in \R^{N}: |z|\leq \rho\}} \frac{1}{|z|^{N+sp-p}} \, dz \nonumber\\
&\leq C \rho^{-sp} \int_{B_{3\rho}(x_{i})}  |u_{n}(x)|^{p} \, dx.
\end{align}
Now, for all $K>4$, it holds 
$$
(\R^{N}\setminus B_{2\rho}(x_{i}))\times B_{2\rho}(x_{i}) \subset (B_{K\rho}(x_{i})\times B_{2\rho}(x_{i})) \cup ((\R^{N}\setminus B_{K\rho}(x_{i}))\times B_{2\rho}(x_{i})).
$$
Therefore, 
\begin{align}\label{Pa6critico}
&\int_{B_{K\rho}(x_{i})} \, dx \int_{\{y\in B_{2\rho}(x_{i}): |x-y|> \rho\}}  |u_{n}(x)|^{p}\frac{|\psi_{\rho}(x)-\psi_{\rho}(y)|^{p}}{|x-y|^{N+sp}} \, dy \nonumber\\
&\leq C\int_{B_{K\rho}(x_{i})} \, dx \int_{\{y\in B_{2\rho}(x_{i}): |x-y|> \rho\}}  |u_{n}(x)|^{p} \frac{1}{|x-y|^{N+sp}} \, dy \nonumber \\
&\leq C  \int_{B_{K\rho}(x_{i})} \, dx \int_{\{z\in \R^{N}: |z|> \rho\}}  |u_{n}(x)|^{p} \frac{1}{|z|^{N+sp}} \, dz \nonumber\\
&\leq C \rho^{-sp} \int_{B_{K\rho}(x_{i})}  |u_{n}(x)|^{p} \, dx.
\end{align}
On the other hand, if $|x-x_{i}|\geq K\rho$ and $|y-x_{i}|<2\rho$ then 
$$
|x-y|\geq |x-x_{i}|-|y-x_{i}|\geq \frac{|x-x_{i}|}{2}+\frac{K\rho}{2}-2\rho>\frac{|x-x_{i}|}{2}.
$$
Consequently, using the H\"older inequality we have
\begin{align}\label{Pa7critico}
&\int_{\R^{N}\setminus B_{K\rho}(x_{i})} \, dx \int_{\{y\in B_{2\rho}(x_{i}): |x-y|>\rho\}}  |u_{n}(x)|^{p} \frac{|\psi_{\rho}(x)-\psi_{\rho}(y)|^{p}}{|x-y|^{N+sp}} \, dy \nonumber\\
&\leq  C \int_{\R^{N}\setminus B_{K\rho}(x_{i})} \, dx \int_{\{y\in B_{2\rho}(x_{i}): |x-y|>\rho\}} \frac{ |u_{n}(x)|^{p}}{|x-x_{i}|^{N+sp}} \, dy \nonumber\\
&\leq C\rho^{N} \int_{\R^{N}\setminus B_{K\rho}(x_{i})} \frac{ |u_{n}(x)|^{p}}{|x-x_{i}|^{N+sp}} \, dx \nonumber\\
&\leq C \rho^{N}\!\left(\int_{\R^{N}\setminus B_{K\rho}(x_{i})} \!\!\! |u_{n}(x)|^{\2} \, dx\right)^{\frac{p}{\2}}\!\! \left(\int_{\R^{N}\setminus B_{K\rho}(x_{i})} \!\!\!|x-x_{i}|^{-(N+sp)\frac{\2}{\2-p}} \, dx\right)^{\frac{\2-p}{\2}} \nonumber\\
&\leq C K^{-N} \left(\int_{\R^{N}\setminus B_{K\rho}(x_{i})}  |u_{n}(x)|^{\2} \, dx\right)^{\frac{p}{p^{*}_{s}}}.
\end{align}
Putting together (\ref{Pa6critico}) and (\ref{Pa7critico}), and using the fact that $(u_{n})$ is bounded in $L^{p^{*}_{s}}(\R^{N})$, we obtain that 
\begin{align}\label{Pa8critico}
B_{\rho, n}\leq C \rho^{-sp} \int_{B_{K\rho}(x_{i})}  |u_{n}(x)|^{p} \, dx+C K^{-N}.
\end{align}
Then, (\ref{Pa1critico})-(\ref{Pa5critico}) and (\ref{Pa8critico}) yield
\begin{align}\label{stimacritico}
\iint_{\R^{2N}}  |u_{n}(x)|^{p}\frac{|\psi_{\rho}(x)-\psi_{\rho}(y)|^{p}}{|x-y|^{N+sp}} \, dx dy \leq C \rho^{-sp} \int_{B_{K\rho}(x_{i})}  |u_{n}(x)|^{p} \, dx+C K^{-N}.
\end{align}
Recalling that $u_{n}\rightarrow u$ strongly in $L^{p}_{loc}(\R^{N}, \R)$ we get
\begin{align*}
\lim_{n\rightarrow \infty}  C \rho^{-sp} \int_{B_{K\rho}(x_{i})}  |u_{n}(x)|^{p} \, dx+C K^{-N} =C \rho^{-sp} \int_{B_{K\rho}(x_{i})} |u(x)|^{p} \, dx+C K^{-N}.
\end{align*}
Using the H\"older inequality we can see that
\begin{align*}
C \rho^{-sp} & \int_{B_{K\rho}(x_{i})}  |u(x)|^{p} \, dx+C K^{-N} \\
&\leq C \rho^{-sp} \left(\int_{B_{K\rho}(x_{i})}  |u(x)|^{p^{*}_{s}} \, dx\right)^{\frac{p}{p^{*}_{s}}} |B_{K\rho}(x_{i})|^{1-\frac{p}{p^{*}_{s}}}+C K^{-N} \\
&\leq C K^{sp}  \left(\int_{B_{K\rho}(x_{i})}  |u(x)|^{p^{*}_{s}} \, dx\right)^{\frac{p}{p^{*}_{s}}}+C K^{-N}\rightarrow C K^{-N} \mbox{ as } \rho\rightarrow 0.
\end{align*}
In conclusion, 
\begin{align*}
&\lim_{\rho\rightarrow 0} \limsup_{n\rightarrow \infty} \iint_{\R^{2N}}  |u_{n}(x)|^{p}\frac{|\psi_{\rho}(x)-\psi_{\rho}(y)|^{p}}{|x-y|^{N+sp}} \, dx dy \nonumber\\
&=\lim_{K\rightarrow \infty}\lim_{\rho\rightarrow 0} \limsup_{n\rightarrow \infty} \iint_{\R^{2N}}  |u_{n}(x)|^{p}\frac{|\psi_{\rho}(x)-\psi_{\rho}(y)|^{p}}{|x-y|^{N+sp}} \, dx dy =0.
\end{align*}
\end{proof}

\noindent
We now give the proof of the following variant of the concentration-compactness lemma \cite{Lions} which is inspired by \cite{Ch, W}. We refer to \cite{A3, DPMV, PP, ZZR} for some results in the fractional context $p=2$.
In what follows, we will use the following notation 
$$
|D^{s} u|^{p}(x)=\int_{\R^{N}} \frac{|u(x)-u(y)|^{p}}{|x-y|^{N+sp}}dy.
$$
\begin{lem}\label{CCL}
Let $(u_{n})$ be a sequence in $D^{s,p}(\R^{N})$ such that $u_{n}\rightharpoonup u$ in $D^{s,p}(\R^{N})$. Let us assume that
\begin{align}\begin{split}\label{46FS}
&|D^{s} u_{n}|^{p}\rightharpoonup \mu \\
&|u_{n}|^{p^{*}_{s}}\rightharpoonup \nu 
\end{split}\end{align}
in the sense of measure, where $\mu$ and $\nu$ are two non-negative bounded measures on $\R^{N}$. 
Then, there exist an at most a countable set $I$, a family of distinct points $(x_{i})_{i\in I}\subset \R^{N}$ and $(\mu_{i})_{i\in I}, (\nu_{i})_{i\in I}\subset (0, \infty)$ such that
\begin{align}
&\nu=|u|^{p^{*}_{s}}+\sum_{i\in I} \nu_{i} \delta_{x_{i}} \label{47FS},\\
&\mu\geq |D^{s}u|^{p}+\sum_{i\in I} \mu_{i} \delta_{x_{i}} \label{48FS}, \\
&\mu_{i}\geq S_{*} \nu_{i}^{\frac{p}{p^{*}_{s}}} \quad \forall i\in I  \label{50FS}.
\end{align}
Moreover, if we define
\begin{equation}\label{muinftydef}
\mu_{\infty}=\lim_{R\rightarrow \infty} \limsup_{n\rightarrow \infty} \int_{|x|>R} |D^{s}u_{n}|^{p}dx,
\end{equation}
and
\begin{equation}\label{nuinftydef}
\nu_{\infty}=\lim_{R\rightarrow \infty} \limsup_{n\rightarrow \infty} \int_{|x|>R} |u_{n}|^{p^{*}_{s}}dx,
\end{equation}
then
\begin{align}
&\limsup_{n\rightarrow \infty} \int_{\R^{N}} |D^{s}u_{n}|^{p}dx=\mu(\R^{N})+\mu_{\infty} \label{51FS},\\
&\limsup_{n\rightarrow \infty} \int_{\R^{N}} |u_{n}|^{p^{*}_{s}}dx=\nu(\R^{N})+\nu_{\infty} \label{52FS}, \\
&\mu_{\infty}\geq S_{*} \nu_{\infty}^{\frac{p}{p^{*}_{s}}} \label{53FS}.
\end{align}
\end{lem}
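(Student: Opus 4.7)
The plan is to adapt Lions' classical concentration-compactness scheme to the fractional $p$-Laplacian setting, relying on the two cut-off estimates already established in Lemmas \ref{etaR} and \ref{psilem}, and to append the mass-at-infinity part in the style of Chabrowski. The single common engine is the nonlocal product-rule bound
\begin{equation*}
[\phi u]_{s,p} \leq \left( \int_{\R^N} |\phi|^p |D^s u|^p dx \right)^{1/p} + \left( \iint_{\R^{2N}} |u(x)|^p \frac{|\phi(x)-\phi(y)|^p}{|x-y|^{N+sp}} dx dy \right)^{1/p},
\end{equation*}
which follows from the algebraic identity $\phi(x)u(x) - \phi(y)u(y) = \phi(y)(u(x)-u(y)) + u(x)(\phi(x)-\phi(y))$, Minkowski's inequality in $L^p(\R^{2N}, |x-y|^{-N-sp} dx dy)$, and the symmetry $\iint \phi(y)^p |u(x)-u(y)|^p |x-y|^{-N-sp} dx dy = \int \phi^p |D^s u|^p dx$.

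For \eqref{47FS}--\eqref{50FS}, I would pass to $v_n := u_n - u \rightharpoonup 0$ in $D^{s,p}(\R^N)$, so that $v_n \to 0$ in $L^p_{loc}(\R^N)$ while remaining bounded in $L^{p^*_s}(\R^N)$. A fractional Brezis--Lieb argument (for both the $L^{p^*_s}$-norm and the Gagliardo seminorm) then identifies $|v_n|^{p^*_s} dx \rightharpoonup \bar\nu := \nu - |u|^{p^*_s}$ and $|D^s v_n|^p dx \rightharpoonup \bar\mu := \mu - |D^s u|^p$ in the weak-$*$ sense, both non-negative measures. Testing the fractional Sobolev inequality in Theorem \ref{Sembedding} against $\phi v_n$ via the product-rule bound above, and disposing of the commutator remainder using $v_n \to 0$ in $L^p_{loc}$, I obtain, in the limit $n \to \infty$, the reverse Hölder inequality
\begin{equation*}
S_*^{1/p} \left( \int_{\R^N} |\phi|^{p^*_s} d\bar\nu \right)^{1/p^*_s} \leq \left( \int_{\R^N} |\phi|^p d\bar\mu \right)^{1/p} \qquad \forall \phi \in C^\infty_c(\R^N).
\end{equation*}
Lions' classical measure-theoretic lemma then forces $\bar\nu$ to be purely atomic on an at most countable set $\{x_i\}_{i \in I}$; taking $\mu_i := \bar\mu(\{x_i\})$ and specializing $\phi = \psi_\rho$ at each $x_i$ with $\rho \to 0$ (Lemma \ref{psilem} supplying precisely the vanishing of the commutator in this degenerating regime) yields \eqref{47FS}, \eqref{48FS} and \eqref{50FS}.

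For the mass at infinity, I would apply the same product-rule bound to $\eta_R u_n$, with $\eta_R$ the cut-off of Lemma \ref{etaR}; the commutator $\iint |u_n(x)|^p |\eta_R(x)-\eta_R(y)|^p |x-y|^{-N-sp} dx dy$ tends to $0$ first in $n$, then in $R$, precisely by Lemma \ref{etaR}. The sandwich $\chi_{\R^N \setminus B_{2R}} \leq \eta_R^{p^*_s} \leq \eta_R^p \leq \chi_{\R^N \setminus B_R}$ forces $\int \eta_R^{p^*_s} |u_n|^{p^*_s} dx$ and $\int \eta_R^p |D^s u_n|^p dx$ to converge in the double limit $\limsup_n$, $R \to \infty$, to $\nu_\infty$ and $\mu_\infty$ respectively, so fractional Sobolev yields \eqref{53FS}. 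For \eqref{51FS}--\eqref{52FS} I would pick $\phi_R \in C_c^\infty(\R^N)$ with $\phi_R \equiv 1$ on $B_R$ and $\phi_R \equiv 0$ off $B_{2R}$, split $\int_{\R^N} |D^s u_n|^p dx$ into $\int \phi_R |D^s u_n|^p dx + \int (1-\phi_R) |D^s u_n|^p dx$, pass to the limit in the first summand via the vague convergence \eqref{46FS} (letting $R \to \infty$ recovers $\mu(\R^N)$), and squeeze the second between $\int_{|x|>2R} |D^s u_n|^p dx$ and $\int_{|x|>R} |D^s u_n|^p dx$ to recover $\mu_\infty$; the same reasoning with $|u_n|^{p^*_s}$ in place of $|D^s u_n|^p$ delivers \eqref{52FS}.

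The main obstacle is the control of the nonlocal commutator $\iint |u_n(x)|^p |\phi(x)-\phi(y)|^p |x-y|^{-N-sp} dx dy$: unlike its local analogue, it does not simply reduce to an integral involving $|\nabla \phi|^p |u_n|^p$, and its vanishing under the two relevant degenerations (cut-off shrinking to a point via $\psi_\rho$, cut-off escaping to infinity via $\eta_R$) is exactly what the delicate annular partitions and Hölder estimates in Lemmas \ref{etaR} and \ref{psilem} accomplish. A secondary but essential point is the fractional Brezis--Lieb identification of $\bar\mu$ and $\bar\nu$ as weak-$*$ limits for the sequence $v_n$; the $L^{p^*_s}$ statement is classical, while the Gagliardo-seminorm version follows from pointwise a.e.\ convergence $u_n(x) \to u(x)$ combined with Fatou's lemma applied against non-negative test functions.
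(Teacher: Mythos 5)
Your proposal follows essentially the same route as the paper: a cut-off--tested Sobolev inequality, a Brezis--Lieb reduction to $v_n=u_n-u$ plus Lions' reverse-H\"older lemma for the atomic part, and the $\eta_R$ cut-off with the sandwich $\chi_{\{|x|>2R\}}\leq\eta_R\leq\chi_{\{|x|>R\}}$ for the mass at infinity, with Lemmas \ref{etaR} and \ref{psilem} supplying the two nonlocal commutator controls exactly as you describe. One justification needs tightening: the gradient-level Brezis--Lieb identity $|D^s v_n|^p\,dx\rightharpoonup\mu-|D^s u|^p$ does \emph{not} follow from ``a.e.\ convergence plus Fatou'' --- Fatou only yields one-sided semicontinuity such as $\mu\geq|D^s u|^p$ and says nothing about the cross terms --- but rather from the full Brezis--Lieb lemma applied to $u_n(x)-u_n(y)$ in the weighted space $L^p\bigl(\R^{2N},\phi(x)|x-y|^{-N-sp}\,dxdy\bigr)$, using $v_n(x)-v_n(y)\to 0$ a.e.\ on $\R^{2N}$ and the boundedness of $[v_n]_{s,p}$; alternatively (and this is what the paper does) you can avoid identifying $\bar\mu$ with $\mu-|D^s u|^p$ altogether by testing the Sobolev inequality on $\psi_\rho u_n$ rather than $\psi_\rho v_n$ to obtain \eqref{50FS} with $\mu_i=\lim_{\rho\to 0}\mu(B_\rho(x_i))$, and then deducing \eqref{48FS} from the mutual singularity of $|D^s u|^p\,dx$ and $\sum_i\mu_i\delta_{x_i}$.
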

\begin{proof}
In order to prove \eqref{47FS}, we aim to pass to the limit in the following relation which holds in view of the Brezis-Lieb lemma \cite{BL}:
\begin{align}\label{55fs}
&\int_{\R^{N}} |\psi|^{p^{*}_{s}} |u_{n}|^{p^{*}_{s}}\, dx = \int_{\R^{N}} |\psi|^{p^{*}_{s}} |u|^{p^{*}_{s}} \, dx+ \int_{\R^{N}} |\psi|^{p^{*}_{s}} |u_{n}-u|^{p^{*}_{s}} \, dx + o_{n}(1), 
\end{align}
where $\psi\in C^{\infty}_{c}(\R^{N})$. Set $\tilde{u}_{n}=u_{n}-u$. Then, by Theorem \ref{Sembedding}, $\tilde{u}_{n}\rightarrow 0$ in $L^{p}_{loc}(\R^{N})$ and a.e. in $\R^{N}$.
Fix $\psi\in C^{\infty}_{c}(\R^{N})$. Using the definition of $S_{*}$, we have
\begin{align}\label{56fs}
\left[\int_{\R^{N}}  |\psi|^{p^{*}_{s}} |u_{n}-u|^{p^{*}_{s}} \, dx\right]^{\frac{p}{p^{*}_{s}}} &= \left[\int_{\R^{N}} |\psi \tilde{u}_{n}|^{p^{*}_{s}} \, dx\right]^{\frac{p}{p^{*}_{s}}}\nonumber \\
&\leq S^{-1}_{*} \int_{\R^{N}} (|D^{s} (\psi\, \tilde{u}_{n})|^{p} dx\nonumber \\
&=S^{-1}_{*} \left[\iint_{\R^{2N}} \frac{|(\psi\tilde{u}_{n})(x)-(\psi \tilde{u}_{n})(y)|^{p}}{|x-y|^{N+sp}} dx dy \right].
\end{align}
Now, we observe that
\begin{align*}
&\iint_{\R^{2N}} \frac{|(\psi\tilde{u}_{n})(x)-(\psi \tilde{u}_{n})(y)|^{p}}{|x-y|^{N+sp}} dx dy \nonumber\\
&\leq 2^{p-1} \Biggl(\iint_{\R^{2N}}  |\psi(y)|^{p} \frac{|\tilde{u}_{n}(x)- \tilde{u}_{n}(y)|^{p}}{|x-y|^{N+sp}}  +|\tilde{u}_{n}(x)|^{p} \frac{|\psi(x)- \psi(y)|^{p}}{|x-y|^{N+sp}} \, dx dy\Biggr).
\end{align*}
It is easy to show that
\begin{align*}
\iint_{\R^{2N}} \frac{|\psi(x)- \psi(y)|^{p}}{|x-y|^{N+2s}} |\tilde{u}_{n}(x)|^{p} dxdy = o_{n}(1). 
\end{align*}
Indeed, arguing as in the proof of Lemma \ref{psilem} (with $x_{i}=0$ and $\rho=1$), if $\psi=1$ in $B_{1}$ and $\psi=0$ in $\R^{N}\setminus B_{2}$, we have
\begin{align*}
&\int_{\R^{N}} \,\int_{\R^{N}} \frac{|\psi(x)- \psi(y)|^{p}}{|x-y|^{N+sp}} |\tilde{u}_{n}(x)|^{p}\, dx dy \\
&=\int_{B_{2}} \int_{\R^{N}} \frac{|\psi(x)- \psi(y)|^{p}}{|x-y|^{N+sp}} |\tilde{u}_{n}(x)|^{p} dx dy+\int_{\R^{N}\setminus B_{2}} \int_{B_{2}} \frac{|\psi(x)- \psi(y)|^{p}}{|x-y|^{N+sp}} |\tilde{u}_{n}(x)|^{p}dx dy \\
&\leq C\int_{B_{K}}  |\tilde{u}_{n}(x)|^{p}\, dx +CK^{-N} \quad  \forall K>4,
\end{align*}
and taking first the limit as $n\rightarrow \infty$ and then as $K\rightarrow \infty$ we get the desired conclusion. \\
Therefore, if we assume that $|D^{s} \tilde{u}_{n}|^{p}\rightharpoonup \tilde{\mu}$ and $|\tilde{u}_{n}|^{p^{*}_{s}}\rightharpoonup \tilde{\nu}$ in the sense of measures, from the above facts and by passing to the limit in \eqref{56fs}, we deduce that
\begin{align*}
\left[ \int_{\R^{N}} |\psi|^{p^{*}_{s}} d\tilde{\nu}\right]^{\frac{1}{p^{*}_{s}}} \leq C \left[ \int_{\R^{N}} |\psi|^{p} d\tilde{\mu}\right]^{\frac{1}{p^{*}_{s}}}, \, \mbox{ for all } \psi \in C^{\infty}_{c}(\R^{N}). 
\end{align*}
Then, using Lemma $1.2$ in \cite{Lions}, there exist at most a countable set $I$, families $(x_{i})_{i\in I}\subset \R^{N}$ and $(\nu_{i})_{i\in I}\subset (0, \infty)$ such that
\begin{align}\label{57fs} 
\tilde{\nu} = \sum_{i\in I} \nu_{i} \delta_{x_{i}}.
\end{align}
In view of \eqref{55fs}, we deduce that $\nu= |u|^{p^{*}_{s}} + \tilde{\nu}$ which together with \eqref{57fs} implies that 
$$
\nu= |u|^{p^{*}_{s}}+ \sum_{i\in I} \nu_{i} \delta_{x_{i}},
$$
that is \eqref{47FS} is verified. \\
Now, we prove that \eqref{50FS} holds true. Take $\psi_{\rho}= \eta(\frac{x-x_{i}}{\rho})$, where $\eta \in C^{\infty}_{c}(\R^{N})$, $0\leq \eta \leq 1$, $\eta=1$ in $B_{1}$ and $\eta=0$ in $\R^{N}\setminus B_{2}$. 
Then, recalling the definition of $S_{*}$ and the following inequality
\begin{align}\label{xy}
(x+y)^{p}\leq x^{p} + C_{p} y^{p}, \, \mbox{ for all } x, y\geq 0, p>1,
\end{align}
we obtain
\begin{align}\label{58fs}
S_{*} \left[ \int_{\R^{N}} |\psi_{\rho}|^{p^{*}_{s}} |u_{n}|^{p^{*}_{s}}\, dx \right]^{\frac{p}{p^{*}_{s}}}&\leq \int_{\R^{N}} |D^{s} (\psi_{\rho} \, u_{n})|^{p} dx \nonumber \\
&\leq  C_{p} \Biggl( \iint_{\R^{2N}} |u_{n}(x)|^{p} \frac{|\psi_{\rho}(x)- \psi_{\rho}(y)|^{p}}{|x-y|^{N+sp}} dxdy\Biggr) \nonumber\\
&+ \Biggl(\iint_{\R^{2N}} |\psi_{\rho}(y)|^{p} \frac{|u_{n}(x)- u_{n}(y)|^{p}}{|x-y|^{N+sp}} dxdy\Biggr).
\end{align}
Now, taking into account \eqref{46FS} and \eqref{47FS}, we have
\begin{align*}
\lim_{n\rightarrow \infty}\int_{\R^{N}} |\psi_{\rho}|^{p^{*}_{s}} |u_{n}|^{p^{*}_{s}}\, dx=\int_{B_{\rho}(x_{j})} |\psi_{\rho}|^{p^{*}_{s}} |u|^{p^{*}_{s}}\, dx + \nu_{i}.
\end{align*}
Since $0\leq\psi_{\rho}\leq 1$ implies
\begin{equation*}
\left|\int_{B_{\rho}(x_{j})} |\psi_{\rho}|^{p^{*}_{s}} |u|^{p^{*}_{s}}\, dx\right|\leq C\int_{B_{\rho}(x_{j})} |u|^{p^{*}_{s}}dx\rightarrow 0 \mbox{ as } \rho\rightarrow 0,
\end{equation*}
we deduce that
\begin{align}\label{stef1}
\lim_{\rho\rightarrow 0}\lim_{n\rightarrow \infty}\int_{\R^{N}} |\psi_{\rho}|^{p^{*}_{s}} |u_{n}|^{p^{*}_{s}} dx= \nu_{i}.
\end{align}
On the other hand, \eqref{46FS} gives 
\begin{equation*}
\lim_{n\rightarrow \infty}\iint_{\R^{2N}} |\psi_{\rho}(y)|^{p} \frac{|u_{n}(x)- u_{n}(y)|^{p}}{|x-y|^{N+sp}}=\int_{\R^{N}} |\psi_{\rho}(y)|^{p} \, d\mu,
\end{equation*}
and using Lemma \ref{psilem} we can see that
\begin{align}\label{stef3}
\lim_{\rho\rightarrow 0}\limsup_{n\rightarrow \infty} &\iint_{\R^{2N}} |u_{n}(x)|^{p} \frac{|\psi_{\rho}(x)- \psi_{\rho}(y)|^{p}}{|x-y|^{N+sp}} \, dx dy=0.
\end{align}
Then, putting together  \eqref{58fs}, \eqref{stef1} and \eqref{stef3} we get
\begin{align*}
S_{*} \nu_{i}^{\frac{p}{p^{*}_{s}}} \leq \lim_{\rho\rightarrow 0} \mu(B_{\rho}(x_{i})). 
\end{align*}
Setting $\mu_{i}= \lim_{\rho\rightarrow 0} \mu(B_{\rho}(x_{i}))$ we deduce that \eqref{50FS} holds true. \\
Now, we can note that
\begin{align*}
\mu\geq \sum_{i\in I} \mu_{i} \delta_{x_{i}}
\end{align*}
and that the weak convergences implies that $\mu \geq |D^{s}u|^{p}$. Then, due to the fact that $|D^{s}u|^{p}$ is orthogonal to $\sum_{i\in I} \mu_{i} \delta_{x_{i}}$, we can infer that \eqref{48FS} is satisfied. 
Finally, we show the validity of \eqref{51FS}-\eqref{53FS}. 
Let $\eta_{R}$ be defined as in Lemma \ref{etaR}. Then we have
\begin{align}\label{BS1}
\int_{\R^{N}} |D^{s}u_{n}|^{p}dx=\int_{\R^{N}} |D^{s}u_{n}|^{p}\eta_{R} dx+\int_{\R^{N}} |D^{s}u_{n}|^{p}(1-\eta_{R})dx.
\end{align}
Since
$$
\int_{|x|>2R} |D^{s}u_{n}|^{p}dx\leq \int_{\R^{N}} |D^{s}u_{n}|^{p}\eta_{R} dx\leq \int_{|x|>R} |D^{s}u_{n}|^{p}dx,
$$
we obtain that 
\begin{equation}\label{BS2}
\mu_{\infty}=\lim_{R\rightarrow \infty} \limsup_{n\rightarrow \infty} \int_{\R^{N}} |D^{s}u_{n}|^{p} \eta_{R} dx.
\end{equation}
Now, using that $\mu$ is finite and $1-\eta_{R}$ has a compact support, we can apply the dominated convergence theorem to get
\begin{align}\label{BS3}
\lim_{R\rightarrow \infty} \limsup_{n\rightarrow \infty} \int_{\R^{N}} |D^{s}u_{n}|^{p}(1-\eta_{R})dx=\lim_{R\rightarrow \infty}  \int_{\R^{N}} (1-\eta_{R})d\mu=\mu(\R^{N}).
\end{align}
Gathering \eqref{BS1}, \eqref{BS2} and \eqref{BS3} we obtain that \eqref{51FS} holds true. In a similar fashion, we can prove that 
\begin{equation}\label{BS4}
\nu_{\infty}=\lim_{R\rightarrow \infty} \limsup_{n\rightarrow \infty} \int_{\R^{N}} |u_{n}|^{p^{*}_{s}} \eta_{R} dx,
\end{equation}
and arguing as before we deduce that \eqref{52FS} is verified. In order to show that \eqref{53FS} is satisfied, we can use Theorem \ref{Sembedding}, $0\leq \eta_{R}\leq 1$ and \eqref{xy}
 to see that
\begin{align}\label{HMI}
S_{*}\left[\int_{\R^{N}} |\eta_{R}u_{n}|^{p^{*}_{s}} dx\right]^{\frac{p}{p^{*}_{s}}}&\leq \int_{\R^{N}} |D^{s}(\eta_{R}u_{n})|^{p}dx   \nonumber\\
&\leq \int_{\R^{N}} \eta_{R} |D^{s}u_{n}|^{p}dx+C_{p}\int_{\R^{N}} |u_{n}|^{p}|D^{s}\eta_{R}|^{p}dx.
\end{align} 
Then, by Lemma \ref{etaR}, we know that
$$
\lim_{R\rightarrow \infty}\limsup_{n\rightarrow \infty}\int_{\R^{N}} |u_{n}|^{p}|D^{s}\eta_{R}|^{p}dx=0,
$$
which together with  \eqref{BS2}, \eqref{BS4}  and \eqref{HMI} yields \eqref{53FS}. This ends the proof of lemma.
\end{proof}

\section{proof of Theorem \ref{thm1}}
This section is devoted to the proof of Theorem \ref{thm1}.
Let us recall that the functional $J: X\rightarrow \R$ associated with problem \eqref{P} is given by
$$
J(u)=\frac{1}{p}\|u\|^{p}_{s,p}+\frac{1}{q}\|u\|^{q}_{s,q}-\lambda\int_{\R^{N}} h(x) F(u)dx-\frac{1}{q^{*}_{s}}|u|_{q^{*}_{s}}^{q^{*}_{s}}.
$$
From assumptions $(f_1)$-$(f_2)$, we know that for all $\e>0$ there exists $C_{\e}>0$ such that
\begin{equation}\label{f}
|f(t)|\leq \e|t|^{p-1}+C_{\e}|t|^{r-1} \mbox{ for all } t\in \R. 
\end{equation}
Using \eqref{f} and Theorem \ref{Sembedding}, it is easy to check that $J$ is well-defined on $X$ and $J\in C^{1}(X, \R)$. 
Now, we prove that $J$ possesses a mountain pass geometry \cite{AR}:
\begin{lem}\label{lem1}
For each $\lambda>0$ the functional $J$ satisfies the following conditions:
\begin{compactenum}[$(i)$]
\item there exist $\alpha, \beta>0$ such that $J(u)\geq \beta$ if $\|u\|=\alpha$,
\item there exists $e\in X$ such that $\|e\|>\alpha$ and $J(e)<0$.
\end{compactenum}
\end{lem}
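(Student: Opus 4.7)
The approach is to verify both conditions of the mountain pass geometry by combining the growth bound \eqref{f} for $f$ with the Sobolev embedding from Theorem~\ref{Sembedding}, treating the two fractional norms separately.

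For part $(i)$, integrating \eqref{f} gives $F(t)\le \frac{\varepsilon}{p}|t|^{p}+\frac{C_{\varepsilon}}{r}|t|^{r}$. I would control the $|t|^{p}$ contribution with $h\in L^{\infty}(\R^{N})$ and apply H\"older's inequality with exponents $\frac{\q}{r}$ and $\frac{\q}{\q-r}$ to the $|t|^{r}$ contribution, invoking the summability $h\in L^{\q/(\q-r)}(\R^{N})$ from $(h)$, to obtain
\begin{equation*}
\lambda\int_{\R^{N}} h(x)F(u)\,dx \le \frac{\lambda\varepsilon |h|_{\infty}}{p}|u|_{p}^{p} + C\lambda C_{\varepsilon}|h|_{\q/(\q-r)}|u|_{\q}^{r}.
\end{equation*}
By Theorem~\ref{Sembedding}, $|u|_{\q}\le C\|u\|_{s,q}$ and $|u|_{\q}^{\q}\le C\|u\|_{s,q}^{\q}$, while $|u|_{p}^{p}\le \|u\|_{s,p}^{p}$. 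Fixing $\varepsilon$ so small that $\lambda\varepsilon|h|_{\infty}\le p/2$ allows the $L^{p}$-term to be absorbed into $\frac{1}{p}\|u\|_{s,p}^{p}$, yielding
\begin{equation*}
J(u)\ge \frac{1}{2p}\|u\|_{s,p}^{p} + \frac{1}{q}\|u\|_{s,q}^{q} - C_{1}\lambda \|u\|_{s,q}^{r} - C_{2}\|u\|_{s,q}^{\q}.
\end{equation*}
If $\|u\|=\alpha\le 1$, then both $\|u\|_{s,p}$ and $\|u\|_{s,q}$ are bounded by $\alpha$, and at least one of them is at least $\alpha/2$; since $p<q<r<\q$, a straightforward comparison shows that the positive $p$- and $q$-terms dominate the subcritical and critical corrections for $\alpha$ small. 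This gives $J(u)\ge \beta$ for some $\beta>0$.

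For part $(ii)$, pick any nontrivial $u_{0}\in C^{\infty}_{c}(\R^{N})$ with $u_{0}\ge 0$; then $u_{0}\in X$ and $|u_{0}|_{\q}>0$. From $(f_{3})$ we have $F\ge 0$ on $[0,\infty)$, and since $h\ge 0$ and $u_{0}\ge 0$ the perturbation integral $\int_{\R^{N}} h\, F(tu_{0})\,dx$ is non-negative for every $t>0$. Therefore
\begin{equation*}
J(tu_{0})\le \frac{t^{p}}{p}\|u_{0}\|_{s,p}^{p}+\frac{t^{q}}{q}\|u_{0}\|_{s,q}^{q}-\frac{t^{\q}}{\q}|u_{0}|_{\q}^{\q}.
\end{equation*}
Since $p<q<\q$, the critical term dominates as $t\to\infty$, so $J(tu_{0})\to -\infty$. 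Choosing $e=t_{0}u_{0}$ with $t_{0}$ sufficiently large yields $\|e\|>\alpha$ and $J(e)<0$.

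I do not anticipate a substantial obstacle; this is the standard mountain-pass setup for critical problems. The only point demanding attention is that the H\"older splitting of the perturbation term relies precisely on the integrability $h\in L^{\q/(\q-r)}(\R^{N})$ supplied by $(h)$, together with the fact that the subcritical exponent $r$ lies in $(q,\q)$, which is exactly what allows both this estimate and the absorption by the $q$-norm term when $\|u\|$ is small. A minor technicality is the interplay between $\|\cdot\|_{s,p}$ and $\|\cdot\|_{s,q}$ when combining them into the full norm $\|\cdot\|$ on $X$, but this is handled by noting that for $\|u\|=\alpha$ at least one of the two summands is comparable to $\alpha$.
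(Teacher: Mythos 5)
Your proof is correct and reaches the same mountain-pass geometry, but the technical route differs in two places that are worth noting. For part $(i)$, you split the $|t|^{r}$-contribution using H\"older with exponents $\frac{q^{*}_{s}}{r}$ and $\frac{q^{*}_{s}}{q^{*}_{s}-r}$ and the integrability $h\in L^{q^{*}_{s}/(q^{*}_{s}-r)}(\mathbb{R}^N)$, then handle the two norms $\|u\|_{s,p}$ and $\|u\|_{s,q}$ by a case analysis on which one carries the bulk of $\alpha$. The paper instead never invokes the $L^{q^{*}_{s}/(q^{*}_{s}-r)}$-summability here: it uses only $h\in L^{\infty}$ together with the subcritical embedding $W^{s,q}(\mathbb{R}^N)\hookrightarrow L^{r}(\mathbb{R}^N)$ (valid since $r\in(q,q^{*}_{s})$), and it avoids your case analysis by observing that $\|u\|_{s,p}^{p}\geq\|u\|_{s,p}^{q}$ when $\|u\|<1$, so both positive pieces collapse to a single $C\|u\|^{q}$, yielding $J(u)\geq\|u\|^{q}(C_2-\lambda C_4\|u\|^{r-q}-C_5\|u\|^{q^{*}_{s}-q})$ in one line. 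Both arguments work; the paper's is tighter because it exploits $p<q$ to reduce to a scalar polynomial inequality, whereas your approach carries more bookkeeping and a small slip in the smallness condition (to absorb $\frac{\lambda\varepsilon|h|_\infty}{p}|u|_p^p$ into $\frac{1}{p}\|u\|_{s,p}^{p}$ one needs $\lambda\varepsilon|h|_\infty\leq\frac12$, not $\leq\frac{p}{2}$; trivial to fix). For part $(ii)$, you drop the non-negative perturbation term and let the critical power $-\frac{t^{q^{*}_{s}}}{q^{*}_{s}}|u_0|_{q^{*}_{s}}^{q^{*}_{s}}$ drive $J(tu_0)\to-\infty$; the paper instead uses the Ambrosetti--Rabinowitz consequence $F(t)\geq At^{\theta}-B$ and lets the perturbation term diverge. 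Your route is in fact a bit more robust, since it does not require choosing $v$ with support inside the set where $h$ is positive, a point the paper leaves implicit.
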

\begin{proof}
By \eqref{f}, it follows that for all $\e\in (0,1)$ there exists $C_{\e}>0$ such that
\begin{align*}
J(u)&\geq \frac{1}{p}\|u\|^{p}_{s,p}+\frac{1}{q}\|u\|^{q}_{s,q}-\lambda \frac{\e}{p}|h|_{\infty}|u|_{p}^{p}-\lambda |h|_{\infty}\frac{C_{\e}}{r} |u|^{r}_{r}-\frac{1}{\q}|u|_{\q}^{\q} \\
&\geq \frac{1}{p}([u]_{s,p}^{p}+(1-\e)|h|_{\infty}|u|_{p}^{p})+ \frac{1}{q}([u]_{s,q}^{q}+|u|_{q}^{q})-\lambda |h|_{\infty}\frac{C_{\e}}{r} |u|^{r}_{r}-\frac{1}{\q}|u|_{\q}^{\q} \\
&\geq C_{1}(\|u\|^{p}_{s,p}+\|u\|^{q}_{s,q}) -\lambda \frac{C_{\e}}{r} |h|_{\infty} |u|^{r}_{r}-\frac{1}{\q}|u|_{\q}^{\q}, 
\end{align*}
where
$$
C_{1}=\min\left\{\frac{1}{p}\min\{1, (1-\e)|h|_{\infty}\}, \frac{1}{q}\right\}.
$$
Now, if $\|u\|<1$, by $q>p$ it follows that $\|u\|^{p}_{s,p}\geq \|u\|_{s,p}^{q}$, and consequently
\begin{align*}
J(u)&\geq C_{1}(\|u\|^{q}_{s,p}+\|u\|^{q}_{s,q}) -\lambda |h|_{\infty}\frac{C_{\e}}{r} |u|^{r}_{r}-\frac{1}{\q}|u|_{\q}^{\q} \\
&\geq C_{2} \|u\|^{q}-\lambda |h|_{\infty}\frac{C_{\e}}{r} |u|^{r}_{r}-\frac{1}{\q}|u|_{\q}^{\q}.
\end{align*}
Thus, it follows from Theorem \ref{Sembedding} that
\begin{align*}
J(u)&\geq C_{2} \|u\|^{q}-\lambda |h|_{\infty}\frac{C_{\e}}{r} \|u\|^{r}_{s,q}-C_{3}\|u\|_{s,q}^{\q} \\
&\geq C_{2}\|u\|^{q}-\lambda C_{4} \|u\|^{r}-C_{5} \|u\|^{\q}\\
&=\|u\|^{q}(C_{2}-\lambda C_{4}\|u\|^{r-q}-C_{5} \|u\|^{\q-q}).
\end{align*}
Since $r\in (q, \q)$, there exist $\alpha, \beta>0$ such that $J(u)\geq \beta$ for all $u\in X$ such that $\|u\|=\alpha$. Hence, $(i)$ is verified.\\
Fix $v\in C^{\infty}_{c}(\R^{N})$ such that $v\geq 0$ and $v\not\equiv 0$ in $\R^{N}$. 
We recall that $(f_3)$ implies that
\begin{equation}\label{AR}
F(t)\geq A t^{\theta}-B \mbox{ for all } t>1,
\end{equation}
for some $A, B>0$.
Hence, using \eqref{AR}, $(h)$ and $\theta\in (q, \q)$ we obtain that
\begin{align*}
J(t v)&\leq \frac{t^{p}}{p}\|v\|^{p}_{s,p}+\frac{t^{q}}{q}\|v\|^{q}_{s,q}-\lambda A t^{\theta} \int_{\supp(v)} v^{\theta} h \,dx+B|\supp(v)||h|_{\infty} \rightarrow -\infty
\end{align*}
as $t\rightarrow \infty$. Therefore, we can find $\tau>0$ sufficiently large such that $\|\tau v\|> \alpha$ and $J(\tau v)<0$. This fact shows that $(ii)$ holds true.
\end{proof}

\noindent
In view of Lemma \ref{lem1} we can define the mountain pass level
$$
c_{*}=\inf_{\gamma\in \Gamma}\max_{t\in [0, 1]} J(\gamma(t)),
$$
where
$$
\Gamma=\left\{\gamma\in C([0, 1], X): \gamma(0)=0 \,\mbox{ and } \, J(\gamma(1))<0\right\}.
$$
Our goal is to prove that $c_{*}$ is achieved by some nontrivial function $u\in X$.
Firstly, we show that it is possible to compare $c_{*}$ with a suitable constant which involves $S_{*}$:
\begin{lem}\label{lem2}
There exists $\lambda_{*}>0$ such that $c_{*}\in \left(0, \left(\frac{1}{\theta}-\frac{1}{q^{*}_{s}}\right) S_{*}^{\frac{N}{sq}}\right)$ for all $\lambda\geq \lambda_{*}$.
\end{lem}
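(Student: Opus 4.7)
My plan is to bound $c_*$ below by the mountain-pass geometry and above by evaluating $J$ along a suitable ray, exploiting the fact that the perturbation $-\lambda\int h F(u)\, dx$ overwhelms the positive terms as $\lambda$ grows. The lower bound is immediate: every admissible path $\gamma \in \Gamma$ must cross the sphere $\{\|u\|=\alpha\}$, on which Lemma \ref{lem1}(i) gives $J \geq \beta > 0$, so $c_* \geq \beta > 0$ for every $\lambda > 0$.

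For the upper bound I fix, once and for all, a test function $v \in C^{\infty}_{c}(\R^N)$ with $v \geq 0$, $v \not\equiv 0$, and $\supp v$ intersecting $\{h > 0\}$ on a set of positive measure --- such a $v$ exists because $h \geq 0$ and $h \not\equiv 0$; in particular $\int h v^{\theta}\, dx > 0$ and $\int h v\, dx > 0$. By (the proof of) Lemma \ref{lem1}(ii), $J(\tau v) \to -\infty$ as $\tau \to \infty$, so $\gamma(t) = t\tau v$ belongs to $\Gamma$ for $\tau$ large and $c_* \leq \max_{s \geq 0} J(sv) =: m_\lambda$, the maximum being attained at some $s_\lambda > 0$ (since $J(sv) > 0$ for small $s > 0$ and $J(sv) \to -\infty$). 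It therefore suffices to prove $m_\lambda \to 0$ as $\lambda \to \infty$.

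The crux of the proof --- and what I expect to be the only real obstacle --- is to show that $s_\lambda \to 0$. Since $s_\lambda$ is a critical point of $s \mapsto J(sv)$,
\begin{equation*}
s_\lambda^{p-1}\|v\|_{s,p}^{p} + s_\lambda^{q-1}\|v\|_{s,q}^{q} = \lambda \int_{\R^N} h(x) f(s_\lambda v) v \, dx + s_\lambda^{\q-1} |v|_{\q}^{\q}.
\end{equation*}
Because $\q > q > p$, this identity first forces $(s_\lambda)$ to stay bounded as $\lambda \to \infty$ (otherwise the critical term on the right would dominate), and then, with the left-hand side bounded, it yields $\int h f(s_\lambda v) v \, dx = O(1/\lambda)$. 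If $s_\lambda \not\to 0$ we extract a subsequence $s_{\lambda_n} \to s_* > 0$; the bound on $s_{\lambda_n}$ together with the growth estimate \eqref{f} justifies passing to the limit under the integral, giving
\begin{equation*}
\int_{\R^N} h(x) f(s_{\lambda_n} v) v \, dx \longrightarrow \int_{\R^N} h(x) f(s_* v) v \, dx > 0,
\end{equation*}
where strict positivity comes from $f > 0$ on $(0,\infty)$ (guaranteed by $(f_3)$) combined with $hv > 0$ on a set of positive measure (by the choice of $v$); this contradicts the $O(1/\lambda_n) \to 0$ bound. Hence $s_\lambda \to 0$, and since $h, F \geq 0$ and $|v|_{\q}^{\q} \geq 0$,
\begin{equation*}
0 < m_\lambda = J(s_\lambda v) \leq \frac{s_\lambda^{p}}{p} \|v\|_{s,p}^{p} + \frac{s_\lambda^{q}}{q} \|v\|_{s,q}^{q} \longrightarrow 0,
\end{equation*}
so $m_\lambda < \left(\frac{1}{\theta} - \frac{1}{\q}\right) S_*^{\frac{N}{sq}}$ for all $\lambda \geq \lambda_*$ with $\lambda_*$ large enough, completing the proof.
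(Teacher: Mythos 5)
Your proof is correct and follows essentially the same approach as the paper: fix a test function $v$, show that the ray maximizer $s_\lambda$ stays bounded (from the critical-point identity, dropping the nonnegative $h f$ term and using $p\le q<\q$), and then derive $s_\lambda\to 0$ by contradiction, so that $c_*\le J(s_\lambda v)\le \frac{s_\lambda^p}{p}\|v\|_{s,p}^p+\frac{s_\lambda^q}{q}\|v\|_{s,q}^q\to 0$. One small improvement over the paper's presentation is that you explicitly require $\supp v$ to meet $\{h>0\}$ in positive measure (so that $\int h f(s_*v)v\,dx>0$); the paper only assumes $v\ge 0$, $v\not\equiv 0$, which would not by itself make its contradiction step $\lambda_n\int h f(t_{\lambda_n}v)t_{\lambda_n}v\,dx\to\infty$ valid, so you have quietly filled a gap left implicit there.
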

\begin{proof}
Let $v\in C^{\infty}_{c}(\R^{N})$ be such that $v\geq 0$ and $v\not\equiv 0$ in $\R^{N}$. Then there exists $t_{\lambda}>0$ such that $J(t_{\lambda} v)=\max_{t\geq 0} J(t v)$.
Accordingly, $\langle J'(t_{\lambda} v), t_{\lambda} v\rangle=0$ that is
\begin{align}\label{nehari}
t_{\lambda}^{p}\|v\|^{p}_{s,p}+t_{\lambda}^{q}\|v\|^{q}_{s,q}=\lambda \int_{\R^{N}} h(x)f(t_{\lambda} v)t_{\lambda} v dx+t^{\q}_{\lambda} |v|_{\q}^{\q}
\end{align}
which combined with $(f_3)$ yields
\begin{align*}
t_{\lambda}^{p}\|v\|^{p}_{s,p}+t_{\lambda}^{q}\|v\|^{q}_{s,q}\geq t^{\q}_{\lambda} |v|_{\q}^{\q}.
\end{align*}
Since $p\leq q<\q$, we can infer that $t_{\lambda}$ is bounded and that there exists a sequence $\lambda_{n}\rightarrow \infty$ such that $t_{\lambda_{n}}\rightarrow t_{0}\geq 0$.
Let us observe that if $t_{0}>0$ then we have
\begin{align*}
t_{\lambda_{n}}^{p}\|v\|^{p}_{s,p}+t_{\lambda_{n}}^{q}\|v\|^{q}_{s,q}\rightarrow L\in (0, \infty)
\end{align*}
and
$$
\lambda_{n} \int_{\R^{N}} h(x)f(t_{\lambda_{n}} v)t_{\lambda_{n}} v dx+t^{\q}_{\lambda_{n}} |v|_{\q}^{\q}\rightarrow \infty
$$
which gives a contradiction in view of \eqref{nehari}. Therefore, $t_{0}=0$. Let us now define $\gamma(t)=t v$ with $t\in [0, 1]$. Then, $\gamma\in \Gamma$ and we get
\begin{equation}\label{c*}
0<c_{*}\leq \max_{t\in [0, 1]} J(t v)=J(t_{\lambda} v)\leq t_{\lambda}^{p}\|v\|^{p}_{s,p}+t_{\lambda}^{q}\|v\|^{q}_{s,q}.
\end{equation}
Taking $\lambda$ sufficiently large, we obtain that 
$$
t_{\lambda}^{p}\|v\|^{p}_{s,p}+t_{\lambda}^{q}\|v\|^{q}_{s,q}<\left(\frac{1}{\theta}-\frac{1}{q^{*}_{s}}\right) S_{*}^{\frac{N}{sq}},
$$
which yields
$$
0<c_{*}<\left(\frac{1}{\theta}-\frac{1}{q^{*}_{s}}\right) S_{*}^{\frac{N}{sq}}.
$$
Moreover, since $t_{\lambda}\rightarrow 0$ as $\lambda\rightarrow \infty$, it follows from \eqref{c*} that $c_{*}\rightarrow 0$ as $\lambda\rightarrow \infty$.
\end{proof}

\noindent
In the lemma below we will make use of the concentration-compactness lemma proved in Section $2$ to verify the Palais-Smale condition:
\begin{lem}\label{lem3}
Let $(u_{n})\subset X$ be a $(PS)_{c_{*}}$ sequence for $J$.
Then, up to subsequences, $u_{n}\rightarrow u$ in $X$ for all $\lambda\geq \lambda_{*}$.
\end{lem}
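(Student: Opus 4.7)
The plan is to follow the classical Palais--Smale analysis for critical variational problems, adapted to the coupling of $(-\Delta)^s_p$ and $(-\Delta)^s_q$. First I show $(u_n)$ is bounded in $X$. Starting from $J(u_n) - \tfrac{1}{\theta}\langle J'(u_n), u_n\rangle = c_* + o(1) + o(\|u_n\|)$ and using $(f_3)$ to give $\int h(x)[\theta F(u_n) - f(u_n)u_n]\,dx \leq 0$, one obtains
$$c_* + o(1) + o(\|u_n\|) \geq \left(\tfrac{1}{p}-\tfrac{1}{\theta}\right)\|u_n\|_{s,p}^p + \left(\tfrac{1}{q}-\tfrac{1}{\theta}\right)\|u_n\|_{s,q}^q + \left(\tfrac{1}{\theta}-\tfrac{1}{\q}\right)|u_n|_{\q}^{\q},$$
with all three coefficients positive since $p, q < \theta < \q$. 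Hence $\|u_n\|_{s,p}$, $\|u_n\|_{s,q}$, and $|u_n|_{\q}$ are uniformly bounded, and up to a subsequence $u_n \rightharpoonup u$ in $X$, $u_n\to u$ a.e., and $u_n\to u$ in $L^t_{\mathrm{loc}}(\R^N)$ for every $t\in[1,\q)$. The convergences $\int h F(u_n)\to \int h F(u)$ and $\int h f(u_n)\varphi\,dx \to \int h f(u)\varphi\,dx$ for every $\varphi\in X$ follow from $(f_1)$--$(f_2)$ and $h\in L^\infty\cap L^{\q/(\q-r)}$ via a standard splitting on $B_R$ and $\R^N\setminus B_R$, and already show that $u$ is a weak solution of \eqref{P}.

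Next I apply Lemma \ref{CCL} to $(u_n)$ with $p$ replaced by $q$, producing an at most countable set $I$, points $(x_i)$, masses $(\mu_i,\nu_i)$, and tail quantities $\mu_\infty,\nu_\infty$ with $\mu_i\geq S_*\nu_i^{q/\q}$ and $\mu_\infty\geq S_*\nu_\infty^{q/\q}$. To rule out each $\nu_i$, I test $\langle J'(u_n),\psi_\rho u_n\rangle = o(1)$ with $\psi_\rho(x)=\psi((x-x_i)/\rho)$ as in Lemma \ref{psilem}. Splitting each fractional-Laplacian piece through $\psi_\rho(x)u_n(x)-\psi_\rho(y)u_n(y) = \psi_\rho(y)(u_n(x)-u_n(y)) + u_n(x)(\psi_\rho(x)-\psi_\rho(y))$ and bounding the second summand by H\"older, Lemma \ref{psilem} (used once with exponent $p$ and once with exponent $q$) forces the commutator pieces to vanish after sending $n\to\infty$ and then $\rho\to 0$. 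The local $L^p$- and $L^q$-norm terms and $\lambda\int hf(u_n)\psi_\rho u_n\,dx$ vanish in the same double limit by strong $L^t_{\mathrm{loc}}$ convergence and subcritical growth. The $p$-Laplacian part contributes a non-negative term, the $q$-Laplacian part contributes $\mu_i$, and the critical term contributes $\nu_i$, so $\mu_i\leq\nu_i$. Combined with $\mu_i\geq S_*\nu_i^{q/\q}$, this forces $\nu_i\in\{0\}\cup[S_*^{N/(sq)},\infty)$, and the second alternative is incompatible with Lemma \ref{lem2} since
$$c_*=\lim_n\Bigl[J(u_n)-\tfrac{1}{\theta}\langle J'(u_n),u_n\rangle\Bigr]\geq \left(\tfrac{1}{\theta}-\tfrac{1}{\q}\right)\nu_i \geq \left(\tfrac{1}{\theta}-\tfrac{1}{\q}\right)S_*^{N/(sq)}.$$
Hence $\nu_i=0$ for every $i$; the analogous argument with the cutoff $\eta_R$ from Lemma \ref{etaR} and limits $n\to\infty$ then $R\to\infty$ eliminates $\nu_\infty$ as well.

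With all atoms and the mass at infinity gone, \eqref{47FS} and \eqref{52FS} yield $|u_n|_{\q}^{\q}\to|u|_{\q}^{\q}$, so by Brezis--Lieb $u_n\to u$ in $L^{\q}(\R^N)$ and in particular $\int|u_n|^{\q-2}u_n(u_n-u)\,dx\to 0$. Combining with $\langle J'(u_n),u_n-u\rangle\to 0$ and $\int hf(u_n)(u_n-u)\,dx\to 0$, I obtain
$$\langle\mathcal{A}_p u_n-\mathcal{A}_p u,u_n-u\rangle+\langle\mathcal{A}_q u_n-\mathcal{A}_q u,u_n-u\rangle \to 0,$$
where $\mathcal{A}_t u=(-\Delta)^s_t u+|u|^{t-2}u$ and the piece $\langle\mathcal{A}_p u+\mathcal{A}_q u,u_n-u\rangle\to 0$ by weak convergence. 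Each summand on the left is non-negative by monotonicity, so both tend to zero separately, and the Simon-type inequality for the fractional $t$-Laplacian then gives $\|u_n-u\|_{s,t}\to 0$ for $t\in\{p,q\}$, i.e.\ $u_n\to u$ in $X$. I expect the concentration step to be the main obstacle: the two-operator structure generates commutator terms in both the $p$- and the $q$-setting, which must all be shown to vanish in the correct order of limits; this is precisely what the preliminary Lemmas \ref{etaR} and \ref{psilem}, formulated for general $p$, are tailored to handle. The clean splitting of the two bilinear forms in the last step, each non-negative on its own, is the other key ingredient that makes the two-operator analysis tractable.
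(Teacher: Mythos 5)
Your proposal is correct and follows the same overall strategy as the paper: boundedness of the Palais--Smale sequence via $J-\tfrac{1}{\theta}J'$, passage to a weak limit that is a critical point, application of Lemma \ref{CCL} in the $q$-setting, elimination of the atoms $\nu_i$ and the tail $\nu_\infty$ by testing with $\psi_\rho u_n$ and $\eta_R u_n$ and invoking Lemmas \ref{psilem} and \ref{etaR}, and the mountain-pass bound of Lemma \ref{lem2} to force $\nu_i=\nu_\infty=0$. Two steps differ in execution. For boundedness you keep the two coefficients $\tfrac{1}{p}-\tfrac{1}{\theta}$ and $\tfrac{1}{q}-\tfrac{1}{\theta}$ separate and argue directly; the paper weakens both to $\tfrac{1}{q}-\tfrac{1}{\theta}$ and then runs a three-case contradiction argument. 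Your version is a little sharper and shorter; both are fine. For the final passage from $u_n\to u$ in $L^{q^*_s}(\R^N)$ to $u_n\to u$ in $X$, you subtract $\langle J'(u_n)-J'(u),u_n-u\rangle$ and use monotonicity to split off $\langle\mathcal{A}_p u_n-\mathcal{A}_p u,u_n-u\rangle\to 0$ and $\langle\mathcal{A}_q u_n-\mathcal{A}_q u,u_n-u\rangle\to 0$ separately, then a Simon-type elementary inequality for each; the paper instead combines $\langle J'(u_n),u_n\rangle=o(1)$ and $\langle J'(u_n),u\rangle=o(1)$ to get $\|u_n\|_{s,p}^p+\|u_n\|_{s,q}^q\to\|u\|_{s,p}^p+\|u\|_{s,q}^q$ and closes with the Brezis--Lieb lemma applied to the fractional-gradient sequences. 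Both routes are legitimate, but note that the Simon inequality has the familiar two-regime form and, since the statement allows $1<p<2$, the degenerate regime requires the interpolated version (an extra H\"older step) that the Brezis--Lieb route sidesteps entirely; worth spelling out if you keep your version. Finally, a small imprecision: the convergences $\int hF(u_n)\to\int hF(u)$ and $\int hf(u_n)\varphi\to\int hf(u)\varphi$ do not by themselves show $u$ is a weak solution -- you also need the weak convergence of the nonlocal terms (e.g.\ $U_n\rightharpoonup U$ in $L^{p'}(\R^{2N})$ and the analogous statement for $q$, as well as $\int|u_n|^{q^*_s-2}u_n\varphi\to\int|u|^{q^*_s-2}u\varphi$), which the paper carries out explicitly.
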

\begin{proof}
We begin by proving that $(u_{n})$ is bounded in $X$. Since $J(u_{n})\rightarrow c_{*}$ and $J'(u_{n})\rightarrow 0$ we have
\begin{align*}
C(1+\|u_{n}\|)&\geq J(u_{n})-\frac{1}{\theta}\langle J'(u_{n}), u_{n}\rangle \\
&=\frac{1}{p}\|u_{n}\|^{p}_{s,p}+\frac{1}{q}\|u_{n}\|^{q}_{s,q}-\lambda\int_{\R^{N}} h(x)F(u_{n})dx-\frac{1}{q^{*}_{s}} |u_{n}|_{q^{*}_{s}}^{q^{*}_{s}} \\
&-\frac{1}{\theta}\left[\|u_{n}\|^{p}_{s,p}+\|u_{n}\|^{q}_{s,q}-\lambda\int_{\R^{N}} h(x)f(u_{n})u_{n}dx-|u_{n}|_{q^{*}_{s}}^{q^{*}_{s}} \right] \\
&\geq \left(\frac{1}{q}-\frac{1}{\theta}\right)(\|u_{n}\|^{p}_{s,p}+\|u_{n}\|^{q}_{s,q})+\frac{\lambda}{\theta}\int_{\R^{N}} h(x)[f(u_{n})u_{n}-\theta F(u_{n})]dx\\
&+\left(\frac{1}{\theta}-\frac{1}{\q}\right) |u_{n}|_{q^{*}_{s}}^{q^{*}_{s}}.
\end{align*}
Then, using $(f_3)$ and $h\geq 0$, we deduce that
\begin{align}\label{absurd}
C(1+\|u_{n}\|)&\geq  \left(\frac{1}{q}-\frac{1}{\theta}\right)(\|u_{n}\|^{p}_{s,p}+\|u_{n}\|^{q}_{s,q}).
\end{align}
Now, we assume by contradiction that $\|u_{n}\|\rightarrow \infty$ and distinguish the following three cases:\\
Case 1: $\|u_{n}\|_{s,p}\rightarrow \infty$ and $\|u_{n}\|_{s,q}\rightarrow \infty$. \\
Then, for $n$ big enough, and using $q>p$, it holds $\|u_{n}\|^{q-p}_{s,q}\geq 1$, that is $\|u_{n}\|^{q}_{s,q}\geq \|u_{n}\|^{p}_{s,q}$. In view of \eqref{absurd} and $(a+b)^{p}\leq C_{p} (a^{p}+b^{p})$ for all $a, b\geq 0$, we can deduce that
$$
C(1+\|u_{n}\|)\geq  \left(\frac{1}{q}-\frac{1}{\theta}\right)(\|u_{n}\|^{p}_{s,p}+\|u_{n}\|^{p}_{s,q})\geq  C_{p}^{-1}\left(\frac{1}{q}-\frac{1}{\theta}\right)(\|u_{n}\|_{s,p}+\|u_{n}\|_{s,q})^{p}=: C_{1}\|u_{n}\|^{p}
$$
which implies that $\|u_{n}\|$ is bounded, that is a contradiction. \\
Case 2: $\|u_{n}\|_{s,p}\rightarrow \infty$ and $\|u_{n}\|_{s,q}$ is bounded.\\
From \eqref{absurd}, we have
$$
C(1+\|u_{n}\|_{s,p}+\|u_{n}\|_{s,q})=C(1+\|u_{n}\|)\geq  \left(\frac{1}{q}-\frac{1}{\theta}\right)\|u_{n}\|^{p}_{s,p}
$$
which yields
$$
C\left(\frac{1}{\|u_{n}\|^{p}_{s, p}}+\frac{1}{\|u_{n}\|^{p-1}_{s,p}}+\frac{\|u_{n}\|_{s,q}}{\|u_{n}\|^{p}_{s,p}} \right)\geq  \left(\frac{1}{q}-\frac{1}{\theta}\right).
$$
Taking the limit as $n\rightarrow \infty$, we get $0\geq \left(\frac{1}{q}-\frac{1}{\theta}\right)>0$ that is a contradiction.\\
Case 3: $\|u_{n}\|_{s,p}$ is bounded and $\|u_{n}\|_{s,q}\rightarrow \infty$.\\
The proof is similar to the previous one.

Summing up, $(u_{n})$ is bounded in $X$.
Then,  up to a subsequence, we may assume that there exists $u\in X$ such that $u_{n}\rightharpoonup u$ in $X$ and $u_{n}\rightarrow  u$ in $L^{t}_{loc}(\R^{N})$ for all $t\in [1, \q)$.
At this point, we prove that $\langle J'(u), \phi\rangle=0$ for all $\phi\in X$.
Consider the sequence
$$
U_{n}(x,y)=\frac{|u_{n}(x)-u_{n}(y)|^{p-2}(u_{n}(x)-u_{n}(y))}{|x-y|^{\frac{N+sp}{p'}}},
$$
and let
$$
U(x,y)=\frac{|u(x)-u(y)|^{p-2}(u(x)-u(y))}{|x-y|^{\frac{N+sp}{p'}}},
$$
where $p'=\frac{p}{p-1}$. It is easy to check that $(U_{n})$ is a bounded sequence in $L^{p'}(\R^{2N})$ and $U_{n}\rightarrow U$ a.e. in $\R^{2N}$. Since $L^{p'}(\R^{2N})$ is a reflexive space, there exists a subsequence, still denoted by $(U_{n})$, such that $U_{n}\rightharpoonup U$ in $L^{p'}(\R^{2N})$, that is
$$
\iint_{\R^{2N}} U_{n}(x,y) g(x,y) dx dy\rightarrow \iint_{\R^{2N}} U(x,y) g(x,y) dx dy \quad \forall g\in L^{p}(\R^{2N}).
$$
Then, for any $\phi\in C^{\infty}_{c}(\R^{N})$, we know that 
$$
g(x,y)=\frac{(\phi(x)-\phi(y))}{|x-y|^{\frac{N+sp}{p}}}\in L^{p}(\R^{2N}),
$$
and 
\begin{align*}
\iint_{\R^{2N}} &\frac{|u_{n}(x)-u_{n}(y)|^{p-2}(u_{n}(x)-u_{n}(y))(\phi(x)-\phi(y))}{|x-y|^{N+sp}} dx dy \\
&\rightarrow \iint_{\R^{2N}} \frac{|u(x)-u(y)|^{p-2}(u(x)-u(y))(\phi(x)-\phi(y))}{|x-y|^{N+sp}} dx dy.
\end{align*}
In a similar way, we can prove that 
\begin{align*}
\iint_{\R^{2N}} &\frac{|u_{n}(x)-u_{n}(y)|^{q-2}(u_{n}(x)-u_{n}(y))(\phi(x)-\phi(y))}{|x-y|^{N+sq}} dx dy \\
&\rightarrow \iint_{\R^{2N}} \frac{|u(x)-u(y)|^{q-2}(u(x)-u(y))(\phi(x)-\phi(y))}{|x-y|^{N+sq}} dx dy.
\end{align*}
On the other hand, since $f$ has subcritical growth and $h\in L^{\infty}(\R^{N})$, we have
$$
\int_{\R^{N}} h(x)f(u_{n})\phi dx\rightarrow \int_{\R^{N}} h(x) f(u)\phi dx, 
$$
$$
\int_{\R^{N}} |u_{n}|^{\q-2}u_{n}\phi dx \rightarrow \int_{\R^{N}} |u|^{\q-2}u\phi dx.
$$
Then, using the above limits and that $\langle J'(u_{n}),\phi \rangle=o_{n}(1)$, we obtain that $\langle J'(u),\phi \rangle=0$ for all $\phi\in C^{\infty}_{c}(\R^{N})$. From the density of $C^{\infty}_{c}(\R^{N})$ in $W^{s, p}(\R^{N})$, we can conclude that $u$ is a critical point of $J$.
Now, $(|D^{s}u_{n}|^{q})$ and $(|u_{n}|^{q^{*}_{s}})$ are bounded sequences in $L^{1}(\R^{N})$, so, applying Prokhorov's Theorem, up to subsequence, we may find two non-negative bounded measures $\mu$ and $\nu$ on $\R^{N}$ such that
\begin{align}\label{FIG}
|D^{s}u_{n}|^{q}\rightharpoonup \mu \mbox{ and } |u_{n}|^{\q}\rightharpoonup \nu.
\end{align}
In the light of Lemma \ref{CCL}, there exist an at most countable index set $I$, sequences $(x_i)\subset \R^{N}$, $(\mu_{i}), (\nu_{i})\subset (0, \infty)$ such that 
\begin{equation}\label{9}
\nu=|u|^{\q}+\sum_{i\in I} \nu_{i}\delta_{x_{i}}, \quad \mu\geq |D^{s}u|^{q}+\sum_{i\in I} \mu_{i}\delta_{x_{i}}, \quad S_{*}\nu_{i}^{q/\q}\leq \mu_{i} \quad \forall i\in I,
\end{equation}
and
\begin{align}
&S_{*}\nu_{\infty}^{q/\q}\leq \mu_{\infty} \label{93}.
\end{align}
where $\mu_{\infty}$ and $\nu_{\infty}$ are defined as in \eqref{muinftydef} and \eqref{nuinftydef} respectively, replacing $p$ by $q$.
Now, we fix a concentration point $x_{i}$, and for any $\rho>0$ we set $\psi_{\rho}(x)=\psi(\frac{x-x_{i}}{\rho})$, where $\psi\in C^{\infty}_{c}(\R^{N})$ is such that $0\leq \psi\leq 1$, $\psi=1$ in $B_{1}$, $\psi=0$ in $\R^{N}\setminus B_{2}$ and $|\nabla \psi|_{\infty}\leq 2$. Since $(u_{n}\psi_{\rho})$ is bounded in $X$, we get $\langle J'(u_{n}), u_{n}\psi_{\rho}\rangle=o_{n}(1)$ that is
\begin{align*}
&\iint_{\R^{2N}} \frac{|u_{n}(x)-u_{n}(y)|^{p-2}(u_{n}(x)-u_{n}(y))}{|x-y|^{N+sp}}(u_{n}(x)\psi_{\rho}(x)-u_{n}(y)\psi_{\rho}(y))dx dy+\int_{\R^{N}} |u_{n}|^{p}\psi_{\rho}dx \\
&+\iint_{\R^{2N}} \frac{|u_{n}(x)-u_{n}(y)|^{q-2}(u_{n}(x)-u_{n}(y))}{|x-y|^{N+sq}}(u_{n}(x)\psi_{\rho}(x)-u_{n}(y)\psi_{\rho}(y))dx dy+\int_{\R^{N}} |u_{n}|^{q}\psi_{\rho}dx \\
&=\lambda\int_{\R^{N}} h(x)f(u_{n}) u_{n}\psi_{\rho} dx+\int_{\R^{N}}|u_{n}|^{\q} \psi_{\rho} dx+o_{n}(1).
\end{align*}
Let us note that for $t\in \{p, q\}$ we have
\begin{align*}
&\iint_{\R^{2N}} \frac{|u_{n}(x)-u_{n}(y)|^{t-2}(u_{n}(x)-u_{n}(y))}{|x-y|^{N+st}}(u_{n}(x)\psi_{\rho}(x)-u_{n}(y)\psi_{\rho}(y))dx dy \\
&= \iint_{\R^{2N}} \frac{|u_{n}(x)-u_{n}(y)|^{t}}{|x-y|^{N+st}}\psi_{\rho}(x)dx dy+ \iint_{\R^{2N}} \frac{|u_{n}(x)-u_{n}(y)|^{t-2}(u_{n}(x)-u_{n}(y))}{|x-y|^{N+st}}u_{n}(y)(\psi_{\rho}(x)-\psi_{\rho}(y))dx dy
\end{align*}
so we can rewrite the above identity as follows
\begin{align}\label{10}
&\iint_{\R^{2N}} \frac{|u_{n}(x)-u_{n}(y)|^{p-2}(u_{n}(x)-u_{n}(y))}{|x-y|^{N+sp}}u_{n}(y)(\psi_{\rho}(x)-\psi_{\rho}(y))dx dy+\int_{\R^{N}} |u_{n}|^{p}\psi_{\rho}dx  \nonumber\\
&+\iint_{\R^{2N}} \frac{|u_{n}(x)-u_{n}(y)|^{q-2}(u_{n}(x)-u_{n}(y))}{|x-y|^{N+sq}}u_{n}(y)(\psi_{\rho}(x)-\psi_{\rho}(y))dx dy+\int_{\R^{N}} |u_{n}|^{q}\psi_{\rho}dx  \nonumber\\
&=- \iint_{\R^{2N}} \frac{|u_{n}(x)-u_{n}(y)|^{p}}{|x-y|^{N+sp}}\psi_{\rho}(x)dx dy-\iint_{\R^{2N}} \frac{|u_{n}(x)-u_{n}(y)|^{q}}{|x-y|^{N+sq}}\psi_{\rho}(x)dx dy   \nonumber\\
&+\lambda\int_{\R^{N}} h(x) f(u_{n}) u_{n}\psi_{\rho} dx+\int_{\R^{N}}|u_{n}|^{\q} \psi_{\rho} dx+o_{n}(1).
\end{align}
Now,
\begin{align*}
&\left|\iint_{\R^{2N}} \frac{|u_{n}(x)-u_{n}(y)|^{p-2}(u_{n}(x)-u_{n}(y))}{|x-y|^{N+sp}}u_{n}(y)(\psi_{\rho}(x)-\psi_{\rho}(y))dx dy\right| \\
&\leq [u_{n}]^{p-1}_{s,p} \left( \iint_{\R^{2N}} \frac{|\psi_{\rho}(x)-\psi_{\rho}(y)|^{p}}{|x-y|^{N+sp}} |u_{n}(y)|^{p} dx dy \right)^{1/p} \\
&\leq C\left( \iint_{\R^{2N}} \frac{|\psi_{\rho}(x)-\psi_{\rho}(y)|^{p}}{|x-y|^{N+sp}} |u_{n}(y)|^{p} dx dy \right)^{1/p},
\end{align*}
and using Lemma \ref{psilem} we obtain the following relations of limits
\begin{align}\label{11}
\lim_{\rho\rightarrow 0} \limsup_{n\rightarrow \infty} \iint_{\R^{2N}} \frac{|u_{n}(x)-u_{n}(y)|^{p-2}(u_{n}(x)-u_{n}(y))}{|x-y|^{N+sp}}u_{n}(y)(\psi_{\rho}(x)-\psi_{\rho}(y))dx dy=0
\end{align}
and
\begin{align}\label{12}
\lim_{\rho\rightarrow 0} \limsup_{n\rightarrow \infty} \iint_{\R^{2N}} \frac{|u_{n}(x)-u_{n}(y)|^{q-2}(u_{n}(x)-u_{n}(y))}{|x-y|^{N+sq}}u_{n}(y)(\psi_{\rho}(x)-\psi_{\rho}(y))dx dy=0.
\end{align}
On the other hand,
\begin{align}\label{13}
\lim_{\rho\rightarrow 0} \limsup_{n\rightarrow \infty} \int_{\R^{N}} |u_{n}|^{p} \psi_{\rho} dx=0=\lim_{\rho\rightarrow 0} \limsup_{n\rightarrow \infty} \int_{\R^{N}} |u_{n}|^{q} \psi_{\rho} dx,
\end{align}
and recalling that $h$ is bounded and $f$ has subcritical growth we get
\begin{align}\label{15}
\lim_{\rho\rightarrow 0} \limsup_{n\rightarrow \infty} \int_{\R^{N}} h(x) f(u_{n})u_{n} \psi_{\rho} dx=0.
\end{align}
Then, putting together \eqref{10}-\eqref{15} and using \eqref{FIG}, we can infer that 
\begin{equation}\label{nmi}
\mu_{i}\leq \nu_{i} \mbox{ for all } i\in I.
\end{equation}

Next, we verify that a similar inequality for $\mu_{\infty}$ and $\nu_{\infty}$ holds true. For this purpose, we use the function $\eta_{R}$ defined as in Lemma \ref{etaR}. Since $(u_{n}\eta_{R})$ is bounded in $X$, it follows that $\langle J'(u_{n}), u_{n}\eta_{R}\rangle=o_{n}(1)$, that is
\begin{align}\label{FS71}
&\iint_{\R^{2N}} \frac{|u_{n}(x)-u_{n}(y)|^{p}}{|x-y|^{N+sp}}\eta_{R}(x)dx dy+ \iint_{\R^{2N}} \frac{|u_{n}(x)-u_{n}(y)|^{p-2}(u_{n}(x)-u_{n}(y))}{|x-y|^{N+sp}}u_{n}(y)(\eta_{R}(x)-\eta_{R}(y))dx dy \nonumber\\
&+\iint_{\R^{2N}} \frac{|u_{n}(x)-u_{n}(y)|^{q}}{|x-y|^{N+sq}}\eta_{R}(x)dx dy+ \iint_{\R^{2N}} \frac{|u_{n}(x)-u_{n}(y)|^{q-2}(u_{n}(x)-u_{n}(y))}{|x-y|^{N+sq}}u_{n}(y)(\eta_{R}(x)-\eta_{R}(y))dx dy \nonumber\\
&+\int_{\R^{N}} |u_{n}|^{p}\eta_{R}dx+\int_{\R^{N}} |u_{n}|^{q}\eta_{R}dx =\lambda\int_{\R^{N}} h(x)f(u_{n}) u_{n}\eta_{R} dx+\int_{\R^{N}}|u_{n}|^{\q} \eta_{R} dx+o_{n}(1).
\end{align}
As before, using Lemma \ref{etaR} instead of Lemma \ref{psilem}, we have 
\begin{align}\label{FS72}
\lim_{R\rightarrow \infty} \limsup_{n\rightarrow \infty} \iint_{\R^{2N}} \frac{|u_{n}(x)-u_{n}(y)|^{p-2}(u_{n}(x)-u_{n}(y))}{|x-y|^{N+sp}}u_{n}(y)(\eta_{R}(x)-\eta_{R}(y))dx dy=0
\end{align}
and
\begin{align}\label{FS73}
\lim_{R\rightarrow \infty} \limsup_{n\rightarrow \infty} \iint_{\R^{2N}} \frac{|u_{n}(x)-u_{n}(y)|^{q-2}(u_{n}(x)-u_{n}(y))}{|x-y|^{N+sq}}u_{n}(y)(\eta_{R}(x)-\eta_{R}(y))dx dy=0.
\end{align}
On the other hand, in view of \eqref{f}, Theorem \ref{Sembedding}, H\"older inequality, the boundedness of $(u_{n})$ in $L^{p}(\R^{N})$ and $h\in L^{\infty}(\R^{N})\cap L^{\frac{\q}{\q-r}}(\R^{N})$, we can show that
\begin{equation}\label{FS74}
\lim_{R\rightarrow \infty}\limsup_{n\rightarrow \infty}\int_{\R^{N}} h(x) f(u_{n}) u_{n}\eta_{R} dx=0.
\end{equation}
Indeed, 
\begin{align*}
&\lim_{R\rightarrow \infty}\limsup_{n\rightarrow \infty}\left|\int_{\R^{N}} h(x) f(u_{n}) u_{n}\eta_{R} dx\right|\\
&\leq \lim_{R\rightarrow \infty}  \limsup_{n\rightarrow \infty} \left[|h|_{\infty}\e |u_{n}|^{p}_{p} +C_{\e} |h|_{\infty}|u_{n}|^{r}_{\q} \left(\int_{|x|>R} h(x)^\frac{\q}{\q-r} dx\right)^{\frac{\q-r}{\q}}\right] \\
&\leq \e C+C'_{\e} \lim_{R\rightarrow \infty} \left(\int_{|x|>R} h(x)^\frac{\q}{\q-r} dx\right)^{\frac{\q-r}{\q}}\\
&\leq \e C
\end{align*}
and from the arbitrariness of $\e>0$ we can see that \eqref{FS74} holds true.
Therefore, in view of \eqref{BS2} and \eqref{BS4} (with $p=q$), \eqref{FS71}-\eqref{FS74}, we obtain that 
\begin{equation}\label{nminfty}
\mu_{\infty}\leq \nu_{\infty}.
\end{equation}

Now we aim to show that $I=\emptyset$ and $\nu_{\infty}=\mu_{\infty}=0$. To achieve our goal, it is enough to prove that $\nu_{i}=0$ for all $i\in I$ and $\nu_{\infty}=0$. If by contradiction $\nu_{j}>0$ for some $j\in I$ or $\nu_{\infty}>0$, then we can use \eqref{9}, \eqref{93}, \eqref{nmi} and \eqref{nminfty} to deduce that $\nu_{j}\geq S_{*}^{\frac{N}{sq}}$ or $\nu_{\infty}\geq S_{*}^{\frac{N}{sq}}$. 
Hence, by $(f_3)$, $h\geq 0$ and \eqref{52FS} (with $p=q$) we get
\begin{align*}
c_{*}&=\lim_{n\rightarrow \infty}\left[J(u_{n})-\frac{1}{\theta}\langle J'(u_{n}), u_{n}\rangle\right] \\
&\geq   \lim_{n\rightarrow \infty}\left[\left(\frac{1}{\theta}-\frac{1}{\q}\right) |u_{n}|^{\q}_{\q}\right] \\
&\geq  \left(\frac{1}{\theta}-\frac{1}{\q}\right) \left[\sum_{i\in I} \nu_{i}+\nu_{\infty}\right],
\end{align*}
which yields
\begin{align*}
c_{*}\geq \left(\frac{1}{\theta}-\frac{1}{\q}\right) S_{*}^{\frac{N}{sq}}.
\end{align*}
This fact gives a contradiction in view of Lemma \ref{lem2}. 
Therefore \eqref{47FS} and \eqref{52FS} (with $p=q$), $\nu_{\infty}=\nu_{i}=0$ for all $i\in I$ yield $|u_{n}|_{\q}\rightarrow |u|_{\q}$ and using the Brezis-Lieb lemma \cite{BL} we have that $u_{n}\rightarrow u$ in $L^{\q}(\R^{N})$. 
By interpolation inequality and the boundedness of $(u_{n})$ in $X$ we have
\begin{equation}\label{qstar}
u_{n}\rightarrow u \mbox{ in } L^{t}(\R^{N}) \quad \forall t\in (p, \q].
\end{equation}
Moreover, it follows from the dominated convergence theorem that
\begin{equation}\label{DCT}
\int_{\R^{N}} |u_{n}|^{\q-2} u_{n} u \,dx\rightarrow \int_{\R^{N}} |u|^{\q} \,dx.
\end{equation}
Now, using \eqref{f}, $r\in (q, \q)$, the boundedness of $(u_{n})$ in $X$, $h\in L^{\infty}(\R^{N})$ and \eqref{qstar}, we can deduce that  
\begin{equation}\label{fun}
\lim_{n\rightarrow \infty} \int_{\R^{N}} h(x) f(u_{n}) (u_{n}-u) dx=0.
\end{equation}
At this point, we use the above relations of limits to show that $u_{n}\rightarrow u$ in $X$.
Taking into account $\langle J'(u_{n}), u_{n}\rangle=o_{n}(1)$, we have that
\begin{align}\label{APPA1}
\|u_{n}\|^{p}_{s,p}+\|u_{n}\|^{q}_{s,q}= \int_{\R^{N}} [\lambda h(x) f(u_{n}) u_{n} +|u_{n}|^{q^{*}_{s}}  ] dx+o_{n}(1).
\end{align}
On the other hand, $\langle J'(u_{n}), u \rangle=o_{n}(1)$ yields
\begin{align}\label{APPA2}
&\iint_{\R^{2N}} \frac{|u_{n}(x)-u_{n}(y)|^{p-2}(u_{n}(x)-u_{n}(y))}{|x-y|^{N+sp}}(u(x)-u(y))dx dy+\int_{\R^{N}} |u_{n}|^{p-2} u_{n}u dx  \nonumber\\
&+\iint_{\R^{2N}} \frac{|u_{n}(x)-u_{n}(y)|^{q-2}(u_{n}(x)-u_{n}(y))}{|x-y|^{N+sq}}(u(x)-u(y))dx dy+\int_{\R^{N}} |u_{n}|^{q-2}u_{n} u dx  \nonumber\\
&= \int_{\R^{N}} [\lambda h(x) f(u_{n}) u +|u_{n}|^{q^{*}_{s}-2}u_{n}u ] dx+o_{n}(1).
\end{align}
Combining \eqref{APPA1} with \eqref{APPA2} and using $u_{n}\rightharpoonup u$ in $X$, \eqref{DCT} and \eqref{fun}, we obtain
$$
\|u_{n}\|_{s,p}^{p}+\|u_{n}\|_{s,q}^{q}=\|u\|_{s,p}^{p}+\|u\|_{s,q}^{q}+o_{n}(1).
$$
Now, applying the Brezis-Lieb lemma \cite{BL} to the following sequences
$$
\frac{|u_{n}(x)-u_{n}(y)|}{|x-y|^{\frac{N+sp}{p}}}\in L^{p}(\R^{2N}), \quad \frac{|u_{n}(x)-u_{n}(y)|}{|x-y|^{\frac{N+sq}{q}}}\in L^{q}(\R^{2N}), \quad u_{n}\in L^{p}(\R^{N})\cap L^{q}(\R^{N}),
$$
we can see that
$$
\|u_{n}-u\|_{s, r}^{r}=\|u_{n}\|_{s,r}^{r}-\|u\|_{s,r}^{r}+o_{n}(1) \quad \mbox{ with } r\in \{p, q\},
$$
from which we can deduce that 
$$
\|u_{n}-u\|_{s, p}^{p}+\|u_{n}-u\|_{s,q}^{q}=o_{n}(1).
$$
Consequently, $u_{n}\rightarrow u$ in $X$ as $n\rightarrow \infty$.
\end{proof}

\noindent
Now, we are ready to give the proof of the main result of this work.
\begin{proof}[Proof of Theorem \ref{thm1}]
Applying the mountain pass theorem \cite{AR}, there exists $u\in X$ such that $J(u)=c_{*}$ and $J'(u)=0$ for all $\lambda$ sufficiently large. Since $c_{*}>0$, we infer that $u\not\equiv 0$. 
Now, we note that all the calculations done in the above lemmas can be repeated replacing $J$ by the functional
$$
J_{+}(u)=\frac{1}{p}\|u\|^{p}_{s,p}+\frac{1}{q}\|u\|^{q}_{s,q}-\lambda\int_{\R^{N}} h(x) F(u)dx-\frac{1}{q^{*}_{s}}|u^{+}|_{q^{*}_{s}}^{q^{*}_{s}}.
$$
Therefore, we can prove that \eqref{P} admits a nontrivial non-negative weak solution $u$. Indeed, using the facts $\langle J_{+}'(u), u^{-}\rangle=0$, $f(t)=0$ for $t\leq 0$ and $(h)$, where $u^{-}=\min\{u, 0\}$, we get
\begin{align*}
&\iint_{\R^{2N}} \frac{|u(x)-u(y)|^{p-2}(u(x)-u(y))}{|x-y|^{N+sp}} (u^{-}(x)-u^{-}(y)) dx dy+\int_{\R^{N}}|u^{-}|^{p} dx \nonumber\\
&+\iint_{\R^{2N}} \frac{|u(x)-u(y)|^{q-2}(u(x)-u(y))}{|x-y|^{N+sq}} (u^{-}(x)-u^{-}(y)) dx dy+\int_{\R^{N}}|u^{-}|^{q} dx \nonumber\\
&=0,
\end{align*}
which combined with the following elementary inequality
\begin{equation}\label{EQT}
|x-y|^{t-2}(x-y)(x^{-}-y^{-})\geq |x^{-}-y^{-}|^{t} \quad \forall x, y\in\R \quad \forall t> 1,
\end{equation}
yields $\|u^{-}\|_{s,p}^{p}+\|u^{-}\|_{s,q}^{q}\leq 0$. Therefore, $u^{-}=0$ in $\R^{N}$, that is $u\geq 0$ in $\R^{N}$. 
In conclusion, we have proved that \eqref{P} admits a nontrivial non-negative solution for all $\lambda$ sufficiently large.
\end{proof}


\end{document}